\title{Canonical heights for correspondences}
\author{Patrick Ingram}
\date{\today}
\address{Fort Collins, USA}
\newcommand{\QQ}{\mathbb{Q}}
\newcommand{\RR}{\mathbb{R}}
\newcommand{\PP}{\mathbb{P}}
\newcommand{\Div}{\operatorname{Div}}
\newcommand{\path}{\mathscr{P}}
\newcommand{\cadiv}{\operatorname{Div}}
\newcommand{\augdiv}{\operatorname{ADiv}}
\newcommand{\nediv}{\operatorname{NDiv}}
\renewcommand{\phi}{\varphi}
\renewcommand{\epsilon}{\varepsilon}
\newtheorem{theorem}{Theorem}[section]
\newtheorem{lemma}[theorem]{Lemma}
\newtheorem{proposition}[theorem]{Proposition}
\newtheorem{corollary}[theorem]{Corollary}
\theoremstyle{remark}
\newtheorem{remark}[theorem]{Remark}
\newtheorem{question}[theorem]{Question}
\newtheorem{example}[theorem]{Example}
\theoremstyle{definition}
\newtheorem{definition}[theorem]{Definition}
\begin{document}
\begin{abstract}
The canonical height associated to a polarized endomporhism of a projective variety, constructed by Call and Silverman and generalizing the N\'{e}ron-Tate height on a polarized Abelian variety, plays an important role in the arithmetic theory of dynamical systems. We generalize this construction to polarized correspondences, prove various fundamental properties, and show how the global canonical height decomposes as an integral of a local height over the space of absolute values on the algebraic closure of the field of definition.
\end{abstract}
\maketitle

\section{Introduction}

\subsection{Dynamics of correspondences}
Let $X$ be an algebraic variety defined over some field $K$, and let $C$ be a \emph{correspondence} on $X$, by which we mean a subvariety $C\subseteq X\times X$ such that the coordinate projections are both finite and surjective.
 We may think of the points $X(\overline{K})$ as a set of vertices, with the points $C(\overline{K})$  defining directed edges between those vertices, and inquire about the structure of the resulting graph. For instance, if $f:X\to X$ is a finite morphism, then the graph of $f$ defines a correspondence on $X$, and the paths in the aforementioned directed graph correspond to orbits in the sense of dynamics. Thus the study of paths in the directed graph associated to a correspondence is a natural generalization of the study of dynamical systems.
 
  The purpose of this note is to associate a canonical height to certain correspondences over number fields in a way which generalizes the Call-Silverman canonical height associated to a polarized algebraic dynamical system~\cite{callsilv}. We note that Autissier~\cite{aut1, aut2} has previously considered canonical heights associated to correspondences, but our construction differs significantly from his; the relation between the two is discussed in Remark~\ref{rem:autissier} below.

In the study of the iteration of a morphism $f:X\to X$, orbits are defined entirely in terms of their initial points (we shall henceforth refer to this as the \emph{deterministic} case), and so many statements about orbits end up being quantified over the set of points $X(K)$ or $X(\overline{K})$, rather than over the set of orbits \emph{per se}. In the study of correspondences this becomes inappropriate, and so we define below a \emph{path space} $\pi:\path\to X$ for a given correspondence, parametrizing paths through the aforementioned directed graph, with initial vertex indicated by $\pi$. We will also construct a \emph{shift map} $\sigma:\path\to\path$ which corresponds to stepping forward once along the path (or, equivalently, forgetting the first vertex of a path). These objects can be constructed in the category of $K$-schemes, but generally not in the category of varieties over $K$, a distinction which becomes important in our discussion of local and global heights. We define $\path$ and $\sigma$ explicitly in Section~\ref{sec:geom}, but for now the reader in encouraged to simply think of $\path(K)$ as being the set of paths whose points all sit in $X(K)$.

\subsection{The global canonical height}
When $K$ is a number field, 
one of the central tools in the study of polarized algebraic dynamical systems over $K$ is the canonical height $\hat{h}_{X, f, L}:X(\overline{K})\to \RR$ of Call and Silverman~\cite{callsilv},  a construction extending classical work of N\'{e}ron and Tate in the context of abelian varieties. Here $X$ is a projective variety, $f:X\to X$ is an endomorphism, and $L$ is an ample divisor on $X$ satisfying $f^*L\sim \alpha L$ in $\Div(X)\otimes \RR$, for some real $\alpha>1$.
Our first result is the existence of a generalization of the Call-Silverman canonical height in the context of dynamical correspondences. 

Let $C$ be a correspondence on $X$ as above, and let $x, y:C\to X$ be the coordinate projections.
We will say that this correspondence is \emph{polarized} if and only if there exists an ample Cartier divisor $L\in \Div(X)\otimes\RR$ and a real number $\alpha>1$ such that $y^* L$ is linearly equivalent to $\alpha x^*L$. It is easy to check that in the case where $C$ is the graph of a morphism $f:X\to X$, the correspondence is polarized just in case the algebraic dynamical system is polarized in the usual sense.

\begin{theorem}
\label{th:canheight}
Given a correspondence $C$ on $X$, defined over a number field $K$ and polarized by $L\in\Div(X)\otimes\RR$ and $\alpha>1$, there exists a function $\hat{h}_{X, C, L}:\path(\overline{K})\to \RR$ such that for all $P\in \path(\overline{K})$
\[\hat{h}_{X, C, L}(\sigma(P))=\alpha\hat{h}_{X, C, L}(P)\]
and
\[\hat{h}_{X, C, L}(P)=h_{X, L}(\pi(P))+O(1).\]
\end{theorem}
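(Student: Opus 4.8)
The plan is to imitate the telescoping construction of Call and Silverman, but carried out on the path space rather than on $X$ itself. For $P \in \path(\overline{K})$ I will write $x_n = x_n(P) = \pi(\sigma^n(P)) \in X(\overline{K})$ for the $n$-th vertex of the path $P$, so that $x_0 = \pi(P)$, and then define
\[
\hat{h}_{X, C, L}(P) = \lim_{n \to \infty} \alpha^{-n}\, h_{X, L}(x_n),
\]
where $h_{X,L}$ denotes a fixed Weil height attached to the $\RR$-divisor $L$ (obtained from the height machine on a basis and extended $\RR$-linearly). The content of the proof is then: (i) this limit exists; (ii) it satisfies the two stated identities.

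Everything rests on a single estimate, which crucially lives on the finite-type variety $C$ rather than on the scheme $\path$. Using the explicit description of $\path$ and $\sigma$ from Section~\ref{sec:geom}, there is a natural morphism $e : \path \to C$ — ``remember only the first edge of the path'' — satisfying $x \circ e = \pi$ and $y \circ e = \pi \circ \sigma$. Since $C$ is projective and $y^* L$ is linearly equivalent to $\alpha\, x^* L$ in $\Div(C)\otimes\RR$, the Weil height machine — functorial under the finite morphisms $x, y : C \to X$ and invariant up to $O(1)$ under linear equivalence, after $\RR$-linear extension — produces a constant $c = c(X, C, L, \alpha) \ge 0$ with
\[
\bigl| h_{X, L}(y(Q)) - \alpha\, h_{X, L}(x(Q)) \bigr| \le c \qquad \text{for all } Q \in C(\overline{K}).
\]
Applying this to $Q = e(\sigma^n(P))$ and using $x(Q) = x_n$, $y(Q) = x_{n+1}$ gives $\bigl| h_{X,L}(x_{n+1}) - \alpha\, h_{X,L}(x_n)\bigr| \le c$ for every $n \ge 0$ and every $P$, with $c$ independent of both.

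The remainder is the familiar Cauchy-sequence argument. From the previous bound,
\[
\bigl| \alpha^{-(n+1)} h_{X,L}(x_{n+1}) - \alpha^{-n} h_{X,L}(x_n) \bigr| = \alpha^{-(n+1)}\bigl| h_{X,L}(x_{n+1}) - \alpha\, h_{X,L}(x_n)\bigr| \le c\,\alpha^{-(n+1)},
\]
and since $\alpha > 1$ the sequence $\alpha^{-n} h_{X,L}(x_n)$ is Cauchy, so the defining limit exists; summing the geometric tail yields $\bigl| \hat{h}_{X,C,L}(P) - h_{X,L}(\pi(P)) \bigr| \le c/(\alpha - 1)$, which is the asserted $O(1)$ estimate. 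For the functional equation, the $n$-th vertex of $\sigma(P)$ is $x_{n+1}(P)$, so $\hat{h}_{X,C,L}(\sigma(P)) = \lim_n \alpha^{-n} h_{X,L}(x_{n+1}) = \alpha \lim_n \alpha^{-(n+1)} h_{X,L}(x_{n+1}) = \alpha\, \hat{h}_{X,C,L}(P)$, exactly.

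I expect the only genuine subtlety to be the one already flagged in the introduction: $\path$ is a scheme but generally not of finite type over $K$, so the height machine may not be invoked on $\path$ directly. The resolution is that it is never needed there — the decisive inequality is produced on the finite-type variety $C$ and then pulled back \emph{pointwise} along $e$ to $\path(\overline{K})$, where a pointwise inequality needs no finiteness hypothesis. The remaining points are bookkeeping: verifying $x \circ e = \pi$ and $y \circ e = \pi \circ \sigma$ against the construction in Section~\ref{sec:geom}, and confirming the $\RR$-linear extension of the height machine behaves as expected. Finally, although only existence is asserted, uniqueness subject to the two displayed properties comes for free by the standard Call--Silverman argument: the difference of two such functions is bounded on $\path(\overline{K})$ and scaled by $\alpha>1$ under $\sigma$, hence identically zero.
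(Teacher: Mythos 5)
Your proof is correct and follows essentially the same route as the paper: the key inequality is established on the finite-type variety $C$ via the polarization and the functoriality of Weil heights, then pulled back pointwise along the edge map $\epsilon$ (your $e$), and the Tate telescoping argument is applied to $\alpha^{-n}h_{X,L}(\pi\circ\sigma^n(P))$. The observation that the height machine is never invoked on the non-finite-type scheme $\path$ itself is exactly the point the paper makes parenthetically, and your closing uniqueness remark is a correct (if unasserted) bonus.
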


As one might suspect, in the case where $C$ is the graph of a morphism $f:X\to X$, we simply recover the usual canonical height (and in this case $\pi:\path\to X$ turns out to be an isomorphism).

\begin{remark}\label{rem:autissier}
Autissier~\cite{aut2} constructs a canonical height associated to a correspondence under certain conditions, and we reflect here on the relation between the constructions. Specifically, suppose that we are in the context of Theorem~\ref{th:canheight} with the additional hypothesis that $\alpha\in\QQ$. Then Autissier constructs a real-valued function $h'_{X, C, L}$ on 1-cycles of $X$ with the property that
\[h'_{X, C, L}(C_* D)=\deg(x)h'_{X, C, L}(D)+\deg(x)\deg(D_K)h_0,\]
where $C_*=y_*x^*$ and $h_0$ is the \emph{charge} of the correspondence, defined in \cite{aut2}.

Autissier's heigh is defined on (1-cycles on) $X(\overline{K})$, while the height constructed in Theorem~\ref{th:canheight} is a function on $\path(\overline{K})$. Note, however, that for any $a\in X(\overline{K})$, the fibre $\pi^{-1}(a)\subseteq \path(\overline{K})$ naturally admits the structure of a probability space. One may construct this measure as the inverse limit of the uniform distribution on paths of length $n$ (weighting the paths appropriately when there is ramification), and it is easy to show that $\hat{h}_{X, C, L}$ becomes a measurable function on this space, that is, a random variable. Let $\mathbb{E}\hat{h}_{X, C, L}(a)$ denote the expected value of $\hat{h}_{X, C, L}(P)$ for $\pi(P)=a$, in other words
\[\mathbb{E}\hat{h}_{X, C, L}(a)=\int_{\pi(P)=a}\hat{h}_{X, C, L}(P)d\mu(P),\]
where $\mu$ is the distribution defined above. It is easy to confirm from Theorem~\ref{th:canheight} that (extending this function linearly to 1-cycles)
\[\mathbb{E}\hat{h}_{X, C, L}(C_* a)=\deg(x)\mathbb{E}\hat{h}_{X, C, L}(a).\]

Since
\[\mathbb{E}\hat{h}_{X, C, L}(a)=h_{X, L}(a)+O(1)=h'_{X, C, L}(a)+O(1),\]
we see that in the special case $h_0=0$ and $\deg(x)>1$, we must actually have $h'_{X, C, L}=\mathbb{E}\hat{h}_{X, C, L}$. In other words, at least in some cases, Autissier's canonical height is the average value of the canonical height constructed in this note.
\end{remark}

One of the useful features of the canonical height in deterministic arithmetic dynamics is that its vanishing identifies the preperiodic orbits.  A path $P\in\path(\overline{K})$ will be called
 \emph{repetitive} if and only if $\pi\circ\sigma^n(P)=\pi\circ\sigma^m(P)$ for some $n\neq m$ (in other words, if and only if the path passes through some vertex twice). Note that, unlike in the case of deterministic dynamics, repetitive paths need not repeat more than once.
\begin{corollary}\label{cor:finitenessforrepetitive}
The set of points $a\in X(\overline{K})$ such that $\pi^{-1}(a)\subseteq \path(\overline{K})$ contains a repetitive path is a set of bounded height. In particular, there are only finitely many $a\in X(K)$ which are the initial vertex of some repetitive path over $\overline{K}$.
\end{corollary}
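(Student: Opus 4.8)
The plan is to run the standard ``canonical height detects preperiodicity'' argument from deterministic dynamics, now using the two properties of $\hat{h}_{X,C,L}$ supplied by Theorem~\ref{th:canheight}. The crucial feature is that the comparison $\hat{h}_{X,C,L}(P) = h_{X,L}(\pi(P)) + O(1)$ holds with an implied constant that is \emph{uniform} over all $P \in \path(\overline{K})$, since then a single repetition along a path will already force the canonical height to be small.

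First I would fix $a \in X(\overline{K})$ and a repetitive path $P$ with $\pi(P) = a$. By definition there are integers $0 \le n < m$ with $\pi(\sigma^n(P)) = \pi(\sigma^m(P))$; set $k = m - n \ge 1$ and $Q = \sigma^n(P)$, so $\pi(\sigma^k(Q)) = \pi(Q)$. Iterating the functional equation gives $\hat{h}_{X,C,L}(\sigma^k(Q)) = \alpha^k\,\hat{h}_{X,C,L}(Q)$, while applying the height comparison to both $Q$ and $\sigma^k(Q)$ and using $\pi(\sigma^k(Q)) = \pi(Q)$ gives
\[\alpha^k\,\hat{h}_{X,C,L}(Q) = h_{X,L}(\pi(Q)) + O(1) = \hat{h}_{X,C,L}(Q) + O(1)\]
with an implied constant independent of $P$. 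Hence $(\alpha^k - 1)\,\hat{h}_{X,C,L}(Q) = O(1)$, and since $\alpha^k - 1 \ge \alpha - 1 > 0$, the quantity $|\hat{h}_{X,C,L}(Q)|$ is bounded by a constant depending only on $X$, $C$, and $L$. Because $Q = \sigma^n(P)$ with $n \ge 0$, the functional equation yields $\hat{h}_{X,C,L}(P) = \alpha^{-n}\,\hat{h}_{X,C,L}(Q)$, so $|\hat{h}_{X,C,L}(P)| \le |\hat{h}_{X,C,L}(Q)|$ is bounded by the same constant; one further application of the comparison then shows $h_{X,L}(a) = \hat{h}_{X,C,L}(P) + O(1)$ is bounded independently of $a$ and $P$. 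This establishes that the set of such $a$ has bounded height.

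For the final assertion I would invoke Northcott finiteness: since $L$ is ample in $\Div(X)\otimes\RR$, comparing $h_{X,L}$ with the height attached to an honest ample integral divisor class shows that $\{a \in X(K) : h_{X,L}(a) \le B\}$ is finite for every $B$, so in particular only finitely many $a \in X(K)$ can be the initial vertex of a repetitive path over $\overline{K}$.

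I do not anticipate a genuine obstacle. The one delicate point is the uniformity of the $O(1)$ across all paths $P$ simultaneously — that the bound on $\hat{h}_{X,C,L} - h_{X,L}\circ\pi$ does not degrade as $P$ varies — but this is precisely the second conclusion of Theorem~\ref{th:canheight}. A secondary, routine technicality is replacing the $\RR$-divisor $L$ by an integral ample class in order to apply Northcott.
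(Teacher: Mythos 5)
Your proof is correct, and it is indeed the standard Tate/Call--Silverman argument the paper evidently intends (the paper leaves this corollary unproved, stating it immediately after Theorem~\ref{th:canheight} as a consequence): uniformity of the $O(1)$ in the comparison $\hat{h}_{X,C,L}(P) = h_{X,L}(\pi(P)) + O(1)$ — which is exactly the constant $\kappa' = \frac{\alpha}{\alpha-1}\kappa$ produced by the telescoping-sum proof — forces $(\alpha^k-1)\hat{h}_{X,C,L}(Q)$ to be bounded at the repetition, and $\alpha^k - 1 \ge \alpha - 1 > 0$ lets you absorb the factor. The final Northcott step for $\RR$-ample $L$ is routine, as you note.
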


Note that it is quite important, in the statement of the corollary, that we consider paths defined over the algebraic closure. Unlike in the deterministic case, paths are not generally defined over the field of definition of the initial point, and indeed are typically not even defined over a finite extension of this field. Furthermore, the finiteness in the above statement must apply to initial points, rather than paths, since it is easy to construct examples wherein $\pi^{-1}(a)$ contains uncountably many repetitive paths for a single $a\in X(K)$. In order to elucidate the situation, we present an example below. A path $P\in\path(\overline{K})$ is said to be \emph{finitely supported} if and only if the set of points of the form $\pi\circ\sigma^n(P)\in X(\overline{K})$ is finite as $n$ varies, in other words, if and only if there are only finitely many distinct points of $X$ occurring as vertices in the path. Note that this is a stronger condition than being repetitive, and is strong enough to immediately imply $\hat{h}_{X, C, L}(P)=0$.

\begin{example}\label{ex:binary}
Consider the correspondence defined on $\PP^1$ by the closure in $\PP^1\times\PP^1$  of the curve
\[C:y^2=x^3-x+1,\]
with $x$ and $y$ corresponding to the  coordinate projections.
Here $\path(\QQ)$ turns out to be uncountable, but it is also true that every path in $\path(\QQ)$ is finitely supported. There are two ways to see this latter claim.

First of all, the curve
\[\left\{\begin{array}{c}y^2=x^3-x+1\\z^2=y^3-y+1\end{array}\right\}\]
parametrizing paths of length 3 under this correspondence is a curve of genus 6. Hence, by Faltings' Theorem, there are only finitely many points $a\in \PP^1(\QQ)$ from which we may build a path of length 3, let alone an infinite path. 

From another view, suppose that $K/\QQ$ is the field of fractions of a DVR with valuation $v$, and that $(a, b)\in C(K)$, with $v(a)<0$. Then we have $v(b)=\frac{3}{2}v(a)$ and so, in particular, there can be no path $P\in\path(K)$ with initial vertex $\pi(P)=a$ (such a path would contradict $v$ being discrete). Turning this around, if $P\in \path(\QQ)$, then each edge $a\to b$ of $P$ must correspond to an integral point $(a, b)\in C(\QQ)$. It is an easy exercise to show that the integral points on $C\subseteq \PP^1\times\PP^1$ are precisely
\[\left\{(-1, \pm 1), (0, \pm 1), (1, \pm 1), (3, \pm 5), (5, \pm 11), (56, \pm 419), (\infty, \infty)\right\}.\]
In particular, the paths defined over $\QQ$ are exactly those sequences with initial vertex $0$, $1$, or $-1$, and subsequent vertices $\pm 1$.  This shows that $\path(\QQ)$ is uncountable, but is contained in only three fibres of $\pi:\path\to\PP^1$, and consists entirely of finitely supported paths.
\end{example}
It is not difficult to extend this argument from integrality to show that, for many interesting correspondences over a number field $K$, the collection $\path(K)$ of $K$-rational paths is rather small, or at least rather pathological. Indeed, Example~\ref{ex:binary} can easily be generalized to a large class correpondences of the form $g(y)=f(x)$, with $g$ and $f$ polynomials, via results of Bilu and Tichy~\cite{bilutichy} and Avanzi and Zannier~\cite{zannier}. The aim of this remark is simply to convince the reader that one should really consider paths defined over $\overline{K}$, and that these are not \emph{a priori} defined over any finite extension of the base (see below).


\subsection{Specialization}

Our next result is a specialization theorem along the lines of Theorem~4.1 of \cite{callsilv}, but again the setting requires us to introduce some additional technical details. If $K$ is a number field and $U/K$ is a  curve, then we may consider a family of correspondences over $U$, that is a family $X\to U$ and $C\subseteq X\times_U X$ where, as usual, the projection maps are finite and surjective. Such an object is said to be \emph{polarized} if and only if there is an ample $L\in\Div(X)\otimes \RR$ and a real number $\alpha>1$ such that $y^*L-\alpha x^*L$ is linearly equivalent to a divisor fibral to the projection $C\to U$. One checks, then, that for all but finitely many $t\in U(\overline{K})$, the fibres $X_t, C_t, L_t$ give a polarized correspondence over $K(t)$.

Writing $t=\eta$ for the generic point of $U$, Theorem~4.1 of \cite{callsilv} gives for deterministic arithmetic dynamical systems the estimate
\[\hat{h}_{X, f, L}(P_t)=\left(\hat{h}_{X_\eta, f_\eta, L_\eta}(P_\eta)+o(1)\right)h_U(t)\]
where $h_U$ is any height on $U$ with respect to a divisor class of degree 1, and $o(1)\to 0$ as $h_U(t)\to \infty$ (see \cite{ghiocamav, var, henon, drinfeld} for improvements and variations in some special cases).

Just as one would not like to consider only paths defined over a number field, however, it turns out that considering only paths defined over $U$, or finite covers thereof, is overly restrictive; it is easy to construct schemes $\mathscr{U}\to U$ such that $\path(\mathscr{U})$ contains many interesting points which do not factor through finite $V\to U$. In simpler terms, elements of $\path(U)$ correspond to paths whose nodes are all $k(U)$-rational and, since function fields of varieties are generally not algebraically closed, this restriction rules out many interesting paths.

In order to give a wider class of paths to which our theorem will apply, we allow ourselves to consider $\mathscr{U}$-valued paths, for arbitrary $\mathscr{U}\to U$, as long as the initial vertex is a $U$-point.

\begin{theorem}
\label{th:specialization}
Fix a family of polarized correspondences over $U$, as above, a dominant morphism of schemes $\phi:\mathscr{U}\to U$, and a path $P\in \path(\mathscr{U})$ with $\pi(P)\in X(U)\subseteq X(\mathscr{U})$. Then for any height $h_U$ on $U$ relative to a divisor class of degree one,
\[\hat{h}_{X_{\phi(t)}, C_{\phi(t)}, L_{\phi(t)}}(P_t)=\left(\hat{h}_{X_{\phi(\eta)}, C_{\phi(\eta)}, L_{\phi(\eta)}}(P_\eta)+o(1)\right)h_{U}(\phi(t)),\]
where $o(1)\to 0$ as $h_{U}(\phi(t))\to \infty$. In particular, the height on the generic fibre may be computed as
\[\hat{h}_{X_{\phi(\eta)}, C_{\phi(\eta)}, L_{\phi(\eta)}}(P_\eta)=\lim_{h_{U}(\phi(t))\to\infty}\frac{\hat{h}_{X_{\phi(t)}, C_{\phi(t)}, L_{\phi(t)}}(P_t)}{h_{U}(\phi(t))}.\]
\end{theorem}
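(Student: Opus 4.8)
The plan is to mimic the proof of Theorem~4.1 of \cite{callsilv}, but working over the path space rather than over $X$ itself, and being careful about the fact that $\path$ is only a scheme, not a variety. First I would reduce to the case $\phi = \mathrm{id}$: given $P \in \path(\mathscr{U})$ with $\pi(P) \in X(U)$, the composition with $\phi$ lets us pull back the family of correspondences to $\mathscr{U}$, and since $\phi$ is dominant, $h_U \circ \phi$ is a height on $\mathscr{U}$ relative to the pullback of a degree-one class, so it suffices to prove the estimate for families over a base scheme with a genuine section-like path. Concretely, the path $P$ amounts to a compatible system of maps: a $U$-point $a = \pi(P)$ of $X$, and for each $n$ a point of (the appropriate fibre product describing) paths of length $n$ emanating from $a$, all defined over $\mathscr{U}$.

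Next I would set up the telescoping argument. By Theorem~\ref{th:canheight} applied fibrewise (for all but finitely many $t$, using the hypothesis that $y^*L - \alpha x^*L$ is fibral so that each fibre is a polarized correspondence over $K(t)$), we have
\[
\hat{h}_{X_{\phi(t)}, C_{\phi(t)}, L_{\phi(t)}}(P_t) = \lim_{n\to\infty} \alpha^{-n} h_{X_{\phi(t)}, L_{\phi(t)}}\big(\pi(\sigma^n(P))_t\big).
\]
The point $\pi(\sigma^n(P))$ is a $\mathscr{U}$-point of $X$, so $t \mapsto h_{X_{\phi(t)}, L_{\phi(t)}}(\pi(\sigma^n(P))_t)$ is a height function on $\mathscr{U}$ in a family; by the standard theory of heights in families (Silverman's specialization machinery, as used in \cite{callsilv}), it equals $\big(h_{X_{\phi(\eta)}, L_{\phi(\eta)}}(\pi(\sigma^n(P))_\eta) + o_n(1)\big) h_U(\phi(t))$, where the error tends to $0$ as $h_U(\phi(t)) \to \infty$. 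The crux is then to control the $o(1)$ uniformly in $n$ after dividing by $\alpha^n$: using the $O(1)$-comparison between $\hat{h}$ and $h_{X,L}$ on each fibre, the constant in $\hat{h}_{X_{\phi(t)},\dots}(P_t) - h_{X_{\phi(t)}, L_{\phi(t)}}(\pi(P)_t)$ can be bounded in terms of the geometry of the family (independently of $t$ away from the bad locus), and likewise on the generic fibre; combining these gives
\[
\hat{h}_{X_{\phi(t)}, C_{\phi(t)}, L_{\phi(t)}}(P_t) = \hat{h}_{X_{\phi(\eta)}, C_{\phi(\eta)}, L_{\phi(\eta)}}(P_\eta)\, h_U(\phi(t)) + o(h_U(\phi(t))).
\]

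The main obstacle, as in \cite{callsilv}, is establishing that the $O(1)$ in the telescoping estimate for $\hat{h} - h_{X,L}$ is \emph{uniform over the family} — i.e., that the implied constant does not blow up as $t$ approaches (but avoids) the finite bad set, and that it is compatible with the generic-fibre constant. Here the extra wrinkle is that $\path$ is not of finite type over $U$, so one cannot simply quote a statement about heights of a single point of a variety; instead one must observe that each individual $\pi(\sigma^n(P))$ \emph{is} a point of the finite-type scheme $X(\mathscr{U})$, apply the finite-type statement there, and then check that the resulting constants $c_n$ satisfy $\sum_n \alpha^{-n} c_n < \infty$ with a bound independent of $t$ — which follows because $c_n$ grows at most like the height of the $n$-th node, itself $O(\alpha^n)$ by Theorem~\ref{th:canheight}. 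Once uniformity is in hand, dividing by $h_U(\phi(t))$ and letting $h_U(\phi(t)) \to \infty$ yields the limit formula for the generic-fibre height, completing the proof.
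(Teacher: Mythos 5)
Your outline correctly identifies the two ingredients — a uniform comparison between $\hat{h}$ and $h_{X,L}$ over the family (the analogue of Theorem~3.1 of \cite{callsilv}) and the specialization of a single section $h_{X_{\psi(t)},L_{\psi(t)}}(f(t)) = h_{X_\eta,L_\eta}(f_\eta)h_U(\psi(t)) + O\bigl(\sqrt{h_U(\psi(t))}\bigr)$ — but the way you propose to combine them has a genuine gap, and the worry you focus on is not the real one.

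First, the concern that $\path$ not being of finite type obstructs the uniform $\hat{h}-h_{X,L}$ bound is misplaced: that bound is obtained by estimating the one-step defect $\bigl|h_{X_t,L_t}(\pi\circ\sigma(P)_t) - \alpha h_{X_t,L_t}(\pi(P)_t)\bigr|$, which involves only a point of $C$ (a finite-type scheme), so the fibral-divisor hypothesis gives $c_1 h_U(t)+c_2$ with $c_1,c_2$ independent of $P$. Second, and more seriously, your proposed series argument does not close: the constant $c_n$ in the specialization estimate for the $n$-th node depends on the section $\pi(\sigma^n(P))$ and on its base cover $V_n \to U$ in a way that is not obviously $O(\alpha^n)$; and even granting $c_n = O(\alpha^n)$, the series $\sum_n \alpha^{-n}c_n$ would then be $\sum_n O(1)$, which diverges. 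So dividing by $\alpha^n$ and summing does not produce a finite bound, let alone one independent of $t$. The step you are missing is the paper's $\limsup$ trick: combining the uniform fibrewise bound and the single-section specialization yields only
\[\limsup_{h_U(\phi(t))\to\infty}\left|\frac{\hat{h}_{X_{\phi(t)},C_{\phi(t)},L_{\phi(t)}}(P_t)}{h_U(\phi(t))} - \hat{h}_{X_{\phi(\eta)},C_{\phi(\eta)},L_{\phi(\eta)}}(P_\eta)\right| \leq c_3,\]
where $c_3$ depends only on $(X,C,L,U)$ and \emph{not} on $P$ or the cover $V$ (the $\sqrt{\cdot}$ term vanishes in the limit). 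One then applies this same estimate to $Q=\sigma^n(P)$ — which lives over a larger finite cover but still satisfies the hypotheses — and uses $\hat{h}(\sigma^n(P)) = \alpha^n\hat{h}(P)$ on both the special and generic fibres to force the $\limsup$ below $\alpha^{-n}c_3$ for every $n$, hence to equal $0$. It is precisely the uniformity of $c_3$ in $P$, not a summability of the per-node errors, that drives the proof.
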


\begin{corollary}
With the assumptions of Theorem~\ref{th:specialization}, let
\[Z = \{t\in \mathscr{U}(\overline{K}):P_t\text{ is finitely supported}\}.\]
Then either $\phi(Z)\cap U(K)$ is finite, or else $\hat{h}_{X_{\phi(\eta)}, C_{\phi(\eta)}, L_{\phi(\eta)}}(P_\eta)=0$.
\end{corollary}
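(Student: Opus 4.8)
The plan is to deduce this from Theorem~\ref{th:specialization} together with the observation, recorded above, that a finitely supported path has canonical height $0$; the finiteness assertion will then drop out of Northcott's theorem on the curve $U$.

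First I would note that, since the family of correspondences over $U$ is polarized, the fibre $(X_t, C_t, L_t)$ is a polarized correspondence for all but finitely many $t$, so by Theorem~\ref{th:canheight} the canonical height $\hat{h}_{X_{\phi(t)}, C_{\phi(t)}, L_{\phi(t)}}$ is defined for all $t\in\mathscr{U}(\overline{K})$ outside a finite set $S$ (corresponding to the finitely many bad $t\in U(\overline{K})$). For $t\in Z\setminus S$ the path $P_t$ is finitely supported, hence $\hat{h}_{X_{\phi(t)}, C_{\phi(t)}, L_{\phi(t)}}(P_t)=0$.

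Next, suppose $\hat{h}_{X_{\phi(\eta)}, C_{\phi(\eta)}, L_{\phi(\eta)}}(P_\eta)\neq 0$. Theorem~\ref{th:specialization} then gives, for $t\in\mathscr{U}(\overline{K})$,
\[\hat{h}_{X_{\phi(t)}, C_{\phi(t)}, L_{\phi(t)}}(P_t)=\bigl(\hat{h}_{X_{\phi(\eta)}, C_{\phi(\eta)}, L_{\phi(\eta)}}(P_\eta)+o(1)\bigr)\,h_{U}(\phi(t)),\]
with the error term tending to $0$ as $h_U(\phi(t))\to\infty$; hence once $h_U(\phi(t))$ exceeds some bound $B$ the factor in parentheses is bounded away from $0$ and the right-hand side is nonzero. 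Comparing with the previous step forces $h_U(\phi(t))\leq B$ for every $t\in Z\setminus S$, so $\phi(Z)$ is a set of bounded $h_U$-height in $U(\overline{K})$ apart from the finitely many points $\phi(S)$. Since $U$ is a curve and $h_U$ is attached to a divisor class of degree one — in particular an ample class — Northcott's theorem shows that $\phi(Z)\cap U(K)$ is finite, which is the assertion (the two alternatives in the statement being logically equivalent to this implication).

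The argument is short, and I do not expect a deep obstacle. The points needing care are the bookkeeping around the finite exceptional set $S$ of fibres over which the correspondence fails to be polarized (where the canonical height, and hence the implication ``finitely supported $\Rightarrow$ height $0$'', is not even meaningful); the verification that Theorem~\ref{th:specialization} is genuinely being applied with its hypotheses in force, i.e.\ to a single fixed path $P\in\path(\mathscr{U})$ with $\pi(P)\in X(U)$; and the check that a degree-one divisor class on the curve $U$ is ample, so that the Northcott finiteness property is available for $U(K)$.
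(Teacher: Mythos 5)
Your argument is correct and matches the paper's intended (and left-implicit) proof: finitely supported paths have specialized canonical height zero, Theorem~\ref{th:specialization} then forces $\phi(Z)$ into a set of bounded $h_U$-height whenever the generic canonical height is positive, and Northcott on the projective closure of $U$ gives finiteness of $\phi(Z)\cap U(K)$. Your attentiveness to the finite bad locus and to the degree-one (hence ample) divisor class on the curve is appropriate and does not change the conclusion.
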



\subsection{Local canonical heights}
The construction of the canonical height above parallels that of Call and Silverman very closely, but in the local theory we see a more significant divergence. One of the fundamental facts about varieties $X$ over fields $K$ is that
\begin{equation}\label{eq:finitetype}X(\overline{K})=\bigcup_{[L:K]<\infty}X(L).\end{equation}
Indeed, this is true whenever $X$ is a $K$-scheme of finite type. 
This means that in the development of local heights for varieties, it is always possible to work over number fields; the theory over the algebraic closure follows once one shows that the values of the functions are independent of the particular extension.

Since $\path$ is not of finite type, \eqref{eq:finitetype} does not hold. Indeed, 
 Example~\ref{ex:binary} and its generalizations
proffer a bounty of instances in which, for every finite extension $L/K$, the paths in $\path(L)$ consist of fairly pathological examples, far from the generic behaviour in $\path(\overline{K})$. 
To remedy this, we employ Gubler's theory of $M$-fields. The following is essentially present in \cite{gubler}.

\begin{theorem}\label{th:measure}
Let $k$ be the algebraic closure of a number field, and let $M_k$ denote the usual set of places of $k$. Then there exists a $\sigma$-algebra $\mathscr{F}$ of subsets of $M_{k}$ and a $\sigma$-finite measure $\mu:\mathscr{F}\to\RR$ such that the following holds: For any finite $L/K$ and any place $v\in M_L$, the set
\[A_{v, L}=\{w\in M_{k}:v\text{ is the restriction to }L\text{ of }w\}\]
is measurable,
and
\begin{equation}\label{eq:localdegree}\mu(A_{v, L})=\frac{[L_v:K_v]}{[L:K]}.\end{equation}
\end{theorem}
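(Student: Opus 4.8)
The plan is to fibre $M_k$ over the (countable) set $M_K$ of places of the base number field $K$ (so $k=\overline{K}$), to equip each fibre with a natural probability measure by realizing it as a projective limit of finite sets of places, and then to glue these together into a $\sigma$-finite measure on all of $M_k$. Fix $v_0\in M_K$, put $F_{v_0}=\{w\in M_k:w|_K=v_0\}$, and for each finite $L/K$ inside $k$ let $S_L=\{v\in M_L:v|_K=v_0\}$, a finite set; restriction of places gives surjective transition maps $S_{L'}\to S_L$ for $L\subseteq L'$ (Chevalley's extension theorem). A point of $F_{v_0}$ is a place of $k=\bigcup_L L$ restricting to $v_0$, equivalently a compatible family of its restrictions, so $F_{v_0}=\varprojlim_L S_L$ as sets. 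Because $k$ is the algebraic closure of a number field it is countable, so the directed system of finite $L/K$ has a cofinal chain $K=M_0\subseteq M_1\subseteq\cdots$ with $\bigcup_n M_n=k$ (take successive composita of an enumeration of the finite subextensions); hence $F_{v_0}=\varprojlim_n S_{M_n}$, which I equip with the cylinder $\sigma$-algebra $\mathscr{F}_{v_0}$ generated by the preimages of points of the $S_{M_n}$. For any finite $L/K$ and $v\in S_L$ the set $A_{v,L}$ is contained in $F_{v_0}$ and, choosing $n$ with $L\subseteq M_n$, equals the (finite) union of the cylinders over $\{v'\in S_{M_n}:v'|_L=v\}$, so $A_{v,L}\in\mathscr{F}_{v_0}$.

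On the finite set $S_{M_n}$ I put the probability measure $\nu_n(\{v\})=[(M_n)_v:K_{v_0}]/[M_n:K]$, which has total mass $1$ since $\sum_{v\in S_{M_n}}[(M_n)_v:K_{v_0}]=[M_n:K]$. Using the analogous identity $\sum_{v'\mid v}[(M_{n+1})_{v'}:(M_n)_v]=[M_{n+1}:M_n]$, where $v'\mid v$ ranges over the places of $M_{n+1}$ above $v$, one checks immediately that the $\nu_n$ are compatible with the transition maps, so Kolmogorov's extension theorem yields a unique probability measure $\mu_{v_0}$ on $(F_{v_0},\mathscr{F}_{v_0})$ pushing forward to $\nu_n$ on each $S_{M_n}$. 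For a general finite $L/K$ and $v\in S_L$, picking $n$ with $L\subseteq M_n$ and decomposing $A_{v,L}$ as above gives
\[\mu_{v_0}(A_{v,L})=\sum_{v'\mid v}\frac{[(M_n)_{v'}:K_{v_0}]}{[M_n:K]}=\frac{[L_v:K_{v_0}]}{[M_n:K]}\sum_{v'\mid v}[(M_n)_{v'}:L_v]=\frac{[L_v:K_{v_0}]\,[M_n:L]}{[M_n:K]}=\frac{[L_v:K_{v_0}]}{[L:K]},\]
where now $v'\mid v$ ranges over the places of $M_n$ above $v$; in particular $\mu_{v_0}$ is independent of the chosen chain.

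Finally, since $M_K$ is countable and $M_k=\bigsqcup_{v_0\in M_K}F_{v_0}$, let $\mathscr{F}$ consist of those $A\subseteq M_k$ with $A\cap F_{v_0}\in\mathscr{F}_{v_0}$ for every $v_0$, and set $\mu(A)=\sum_{v_0\in M_K}\mu_{v_0}(A\cap F_{v_0})\in[0,\infty]$. This is manifestly a $\sigma$-algebra and a measure, and it is $\sigma$-finite because $M_k$ is the countable union of the sets $F_{v_0}=A_{v_0,K}$, each of measure $1$. For any finite $L/K$ and $v\in M_L$ one has $A_{v,L}\subseteq F_{v_0}$ with $v_0=v|_K$, so $A_{v,L}\in\mathscr{F}$ and $\mu(A_{v,L})=\mu_{v_0}(A_{v,L})=[L_v:K_v]/[L:K]$, which is exactly \eqref{eq:localdegree}.

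The argument is soft once one spots the profinite fibre structure; the only genuine input is the multiplicativity of local degrees in a tower, which supplies both the normalization of the $\nu_n$ and their consistency, the one step I would be careful to verify cleanly. The point deserving emphasis is that, because $M_K$ is infinite, one cannot hope for a probability measure on all of $M_k$ realizing \eqref{eq:localdegree}, and a $\sigma$-finite measure is the natural object — this is, in essence, Gubler's $M$-field structure on $k$ (cf.\ \cite{gubler}), recast in the form we need.
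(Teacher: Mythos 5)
Your proof is correct, and it takes a genuinely different route from the paper's. The paper works directly with the algebra $\mathscr{F}_{0,v}$ generated by the sets $A_{w,L}$ for the \emph{entire} directed system of finite $L/K$, defines a finitely additive set function on it via the local-degree formula (well-definedness being exactly the coherence identity $\sum_{w\mid v}[E_w:L_w]=[E:L]$), and then invokes the Carath\'{e}odory extension theorem; the $\sigma$-additivity hypothesis is discharged by observing that, with the $A_{w,L}$ as a basis, the fibre $M_{k,v}$ is a compact Hausdorff inverse limit of finite discrete spaces in which every member of $\mathscr{F}_{0,v}$ is clopen, so a countable disjoint family with clopen union must be finite. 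You instead pass to a cofinal chain $K=M_0\subseteq M_1\subseteq\cdots$ (available since $k$ is countable), identify the fibre $F_{v_0}$ with $\varprojlim_n S_{M_n}$, and apply Kolmogorov's extension theorem; the same coherence/tower identity supplies both the normalization of the $\nu_n$ and their consistency. The two are morally equivalent constructions of a Borel measure on a profinite set, and indeed Kolmogorov's theorem for such systems is itself proved by the same compactness-of-clopens argument, so your version relocates rather than removes that ingredient. What the paper's Carath\'{e}odory route buys is that it applies verbatim to the directed system without passing to a chain, which matters because the statement there (Theorem~\ref{th:measuredplaces}) allows a general coherent $M_K$ where $k$ need not be countable; what your route buys is a formulation immediately recognizable from probability, at the small cost of choosing a chain and then noting (as you do) that the result is independent of that choice because the $A_{v,L}$ form a generating $\pi$-system. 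Your gluing step over the countable $M_K$ and the final degree computation match the paper's Lemma~\ref{lem:disjointunion} and are correct.
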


The ratio in~\eqref{eq:localdegree} appears frequently in weighted sums in the theory of heights, and one should think of integrating with respect to this measure as a version of these weighted sums. In particular, for any non-zero $\alpha\in k$, the functions $v\mapsto \log|\alpha|_v$ and $v\mapsto\log^+|\alpha|_v$ are both measurable, and satisfy
\[\int_{M_k}\log|\alpha|_vd\mu(v)=0\]
(essentially the product formula) and
\[\int_{M_k}\log^+|\alpha|_vd\mu(v)=h(\alpha).\]
This construction allows us to present a theory of local heights for correspondences. 
\begin{theorem}
\label{th:localheights}
Let $C$ be a correspondence on $X$, polarized by an ample class $L$ and a real $\alpha>1$, let $\mathscr{W}=\path\setminus\pi^*(L)$, and fix a local height $\lambda_{X, L}$ on $X$ relative to $L$. Then there exists a function \[\hat{\lambda}_{X, C, L}:\mathscr{W}\times M_{\overline{K}}\to\RR\] such that the following hold:
\begin{enumerate}
\item $\hat{\lambda}_{X, C, L}(\cdot, v):\mathscr{W}(\overline{K})\to \RR$ is $v$-adically continuous.
\item \[\hat{\lambda}_{X, C, L}(P, v)=\lambda_{X, L}\left(\pi(P), v\right)+O(1),\]
where the implied constant depends on the place.
\item For some $f\in k(C)\otimes\RR$ satisfying $y^*L - \alpha x^*L=\operatorname{div}(f)$, we have
\[\hat{\lambda}_{X, C, L}(\sigma(P), v)=\alpha\hat{\lambda}_{X, C, L}(P, v)+\log|f\circ \epsilon(P)|_v,\]
where $\epsilon:\path\to C$ takes a path to the point on $C$ which gives the first edge. 
\item For $P\in\path(\overline{K})$ function $\hat{\lambda}_{X, C, L}(P, \cdot)$ is measurable, and
\[\hat{h}_{X, C, L}(P)=\int_{M_{\overline{K}}}\hat{\lambda}_{X, C, L}(P, v)d\mu(v).\]
\end{enumerate}
\end{theorem}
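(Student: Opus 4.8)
The plan is to follow the Call--Silverman recipe for local canonical heights, with the shift $\sigma$ and the first-edge map $\epsilon:\path\to C$ playing the role of the endomorphism, and to bring in Gubler's formalism only at the end, in order to pass from the pointwise assertions (1)--(3) to the integral formula (4). The starting observation is that, since a path's first edge runs from its initial vertex to the next one, we have $x\circ\epsilon=\pi$ and $y\circ\epsilon=\pi\circ\sigma$, so the ``error term'' that must be controlled lives on the honest variety $C$. Precisely, for each place $v$ I would set
\[\gamma(Q,v)=\lambda_{X,L}(y(Q),v)-\alpha\,\lambda_{X,L}(x(Q),v)-\log|f(Q)|_v\qquad(Q\in C(\overline{K})),\]
with $f$ as in part (3). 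Because $y^*L-\alpha x^*L=\operatorname{div}(f)$, the functoriality of local heights and the fact that Weil functions for rationally equivalent divisors differ by a bounded continuous function show that $\gamma(\cdot,v)$ extends to a $v$-adically continuous, \emph{bounded} function on all of $C(\overline{K})$ (the polar contributions of the three terms along $x^{-1}|L|\cup y^{-1}|L|$ cancel), with a bound that vanishes for all but finitely many places of any given finite field of definition. I would then simply \emph{define}
\[\hat{\lambda}_{X,C,L}(P,v)=\lambda_{X,L}(\pi(P),v)+\sum_{n=0}^{\infty}\alpha^{-(n+1)}\,\gamma\bigl(\epsilon(\sigma^{n}(P)),v\bigr);\]
since $\alpha>1$ and $\gamma(\cdot,v)$ is bounded, the series converges absolutely and uniformly in $P$, and only the leading term requires $\pi(P)\notin|L|$, which is exactly why the statement is made on $\mathscr{W}=\path\setminus\pi^{*}(L)$.

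Properties (1)--(3) then come out directly. The bound on the tail gives (2), with implied constant $(\alpha-1)^{-1}\sup_{Q}|\gamma(Q,v)|$. Since $\pi,\sigma,\epsilon$ are morphisms of $K$-schemes, $\lambda_{X,L}(\cdot,v)$ is $v$-adically continuous off $|L|$, and $\gamma(\cdot,v)$ is continuous everywhere on $C(\overline{K})$, uniform convergence yields the continuity in (1). For (3) one reindexes the defining series: using $\gamma(\epsilon(P),v)=\lambda_{X,L}(\pi\sigma(P),v)-\alpha\lambda_{X,L}(\pi(P),v)-\log|f\circ\epsilon(P)|_v$, a one-line manipulation gives $\hat{\lambda}_{X,C,L}(\sigma(P),v)=\alpha\hat{\lambda}_{X,C,L}(P,v)+\log|f\circ\epsilon(P)|_v$. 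The same cancellation shows that the $N$-th partial sum $S_N(P,v)$ of the series equals
\[\alpha^{-N}\lambda_{X,L}\bigl(\pi(\sigma^{N}(P)),v\bigr)-\sum_{n=0}^{N-1}\alpha^{-(n+1)}\log\bigl|f(\epsilon(\sigma^{n}(P)))\bigr|_v,\]
an identity I would reuse for part (4).

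For (4) I would invoke Theorem~\ref{th:measure} and Gubler's theory of $M$-fields. Choosing $\lambda_{X,L}$ to arise from an $M$-metric, the functions $v\mapsto\lambda_{X,L}(a,v)$ and $v\mapsto\log|f(Q)|_v$ are $\mathscr{F}$-measurable, so $v\mapsto\hat{\lambda}_{X,C,L}(P,v)$ is measurable, being a pointwise limit of measurable partial sums. Integrating the displayed expression for $S_N(P,v)$ over $M_{\overline{K}}$ and using $\int\log|\beta|_v\,d\mu(v)=0$ for $\beta\in\overline{K}^{\times}$ together with $\int\lambda_{X,L}(a,v)\,d\mu(v)=h_{X,L}(a)$ (part of what it means for $\lambda_{X,L}$ to be a local height in this setting) gives $\int S_N(P,v)\,d\mu(v)=\alpha^{-N}h_{X,L}(\pi(\sigma^{N}(P)))$, whose limit as $N\to\infty$ is $\hat{h}_{X,C,L}(P)$ by Theorem~\ref{th:canheight} (which forces $\hat{h}_{X,C,L}(P)=\lim_{N}\alpha^{-N}h_{X,L}(\pi(\sigma^{N}(P)))$). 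Thus (4) is reduced to interchanging $\int_{M_{\overline{K}}}$ with $\lim_{N}$.

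That interchange is the step I expect to be the main obstacle: it needs an $N$-uniform, $\mu$-integrable majorant for the $S_N(P,\cdot)$, equivalently an $M$-bounded integrable bound for $v\mapsto\sup_{Q}|\gamma(Q,v)|$ and for $v\mapsto|\lambda_{X,L}(\pi(P),v)|$. This is precisely the quantitative core of Gubler's formalism --- $M$-boundedness of the difference of two $M$-metrics on a line bundle, and integrability of the associated constants --- and it must be genuinely used rather than merely cited. A second, more routine, point is that the identities $\int\log|f(Q)|_v\,d\mu=0$ and $\int\lambda_{X,L}(a,v)\,d\mu=h_{X,L}(a)$ require $Q\notin|\operatorname{div}(f)|$ and $a\notin|L|$; I would first prove (4) for paths $P$ whose entire vertex set avoids $|L|$ (so that every $\epsilon(\sigma^{n}(P))$ avoids $|\operatorname{div}(f)|\subseteq x^{-1}|L|\cup y^{-1}|L|$), and then remove this restriction by replacing $L$ with a linearly equivalent representative, both sides of (4) being insensitive to the choice.
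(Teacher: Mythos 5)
Your plan is essentially the paper's: you write the ``error'' $\gamma$ on $C$ measuring the failure of $\lambda_{X,L}\circ\pi$ to satisfy the desired transformation rule, then form the geometric series
\[
\hat\lambda(P,v)=\lambda_{X,L}(\pi(P),v)+\sum_{n\geq 0}\alpha^{-(n+1)}\gamma\bigl(\epsilon(\sigma^n(P)),v\bigr),
\]
which is exactly the telescoping sum underlying the Banach fixed-point argument the paper uses instead (Lemma~\ref{lem:tatething}). The two are equivalent, yours being the explicit version of the contraction's fixed point.

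There is, however, a genuine sign error in your $\gamma$, and it is consequential. Since $\operatorname{div}(f)=y^*L-\alpha x^*L$, the functoriality of Weil functions gives
\[
\lambda_{X,L}(y(Q),v)-\alpha\lambda_{X,L}(x(Q),v)=\lambda_{C,\operatorname{div}(f)}(Q,v)+O(1)=-\log|f(Q)|_v+O(1),
\]
so your $\gamma(Q,v)=\lambda_{X,L}(y(Q),v)-\alpha\lambda_{X,L}(x(Q),v)-\log|f(Q)|_v$ equals $-2\log|f(Q)|_v+O(1)$ and is unbounded along $\operatorname{Supp}(\operatorname{div}(f))\supseteq x^{-1}|L|\cup y^{-1}|L|$. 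The cancellation of polar contributions you invoke requires $+\log|f(Q)|_v$. As written, the series does not converge absolutely or uniformly, and the construction fails. Correcting the sign produces a bounded $\gamma$, but then the functional equation you derive in part (3) reads $\hat{\lambda}(\sigma(P),v)=\alpha\hat{\lambda}(P,v)-\log|f\circ\epsilon(P)|_v$, which is what the paper's own proof establishes; the $+$ sign in the statement of Theorem~\ref{th:localheights} is a typo, corrected in the restatement in Section~7. You were misled by the statement, but your own boundedness and convergence claims are incompatible with your $\gamma$.

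For part (4), your route --- integrate the telescoping partial sum $S_N$, use the product formula and the local-to-global identity, then pass the limit through $\int_{M_k}$ --- differs modestly from the paper's, which sets $h^*=\int\hat\lambda\,d\mu$, proves $h^*=\hat h+O(1)$ via Theorem~\ref{th:localtoglobalweilheight} (its general local-to-global formula for augmented divisors on provarieties), shows $h^*(\sigma P)=\alpha h^*(P)$, and then lets $n\to\infty$ in $\alpha^{-n}|h^*-\hat h|(\sigma^n P)$. Either works; you correctly identify the interchange of limit and integral as the remaining technical burden, and it is discharged by noting that the corrected $\gamma$ satisfies $|\gamma(Q,v)|\le\Gamma(v)$ for some $\Gamma\in L^1(M_k)$ supported on finitely many sets $A_{w,L}$ (since $X,C,L,f$ are defined over a number field), giving an $N$-uniform integrable dominant for $S_N(P,\cdot)-\lambda_{X,L}(\pi(P),\cdot)$. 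With the sign fixed and this majorant spelled out, your proof goes through.
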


\subsection{Questions}
Before proceeding with a more formal development, we pose a few natural questions arising from the results in this paper. For this discussion, we fix a number field $K$, a projective variety $X$ (which, for the sake of simplicity, we will assume to be $\PP^N$), and a correspondence $C$ on $X$.

\begin{question}\label{q:canheightzero}
If the correspondence $C$ is polarized by $L$, for which $P\in\path(\overline{K})$ do we have $\hat{h}_{X, C, L}(P)=0$?
\end{question}

In the case of deterministic arithmetic dynamics, this is fairly well understood, and even in the present context there are some obvious sufficient conditions. But we also produce a class of examples in Section~\ref{sec:globcan} which contradict what might be an initial naive answer to Question~\ref{q:canheightzero}.

In order to pose the next question, we note that it follows from Corollary~\ref{cor:finitenessforrepetitive} that there exists an $B$ such that, if $P\in \path(\overline{K})$ is preperiodic, and $\pi(P)\in X(K)$, then there is a path $Q\in \path(\overline{K})$ with $\pi(P)=\pi(Q)$, and $\sigma^n(Q)=\sigma^{m}(Q)$ for some $0\leq m<n\leq B$. In other words, a preperiodic path whose initial vertex is $K$-rational must share this initial vertex with a preperiodic path of bounded length. 
\begin{question}\label{q:ubc}
How uniform can we make the value $B$ in the above remark? Is there a value which depends only on the degrees of the coordinate projections, and the degree of $K$ (in the case $X=\PP^N$)? Note that this is a generalization of the uniform boundedness conjecture of Morton and Silverman~\cite{mortsilv}.
\end{question}


\section{Geometry of correspondences}\label{sec:geom}

In this section we define some notions needed in the geometry of a correspondence. We may proceed with the following constructions in any category admitting fibre products and inverse limits, but in order to keep things tangible we work in the category of $S$-schemes, for some fixed  scheme $S$. It is tempting to work in the category of varieties over a field, but this too restrictive to allow a satisfactory construction of the path space. The reader who is not interested in the content of Section~\ref{sec:var} could take $S$ to be the spectrum of a field.

Let $X$ be a separated, integral $S$-scheme of finite type (i.e., a variety if $S$ is the spectrum of a field), and let $C\subseteq X\times_S X$ be a subscheme such that the two projection maps are finite and surjective. Our first goal is to construct the path space $\path_C$ (we omit the subscript when the context makes it obvious) of the correspondence on $X$ defined by these data. To do this, we construct a sequence of $S$-schemes parametrizing paths of finite length, and take $\path$ to be their inverse limit. There is then a natural interpretation of the correspondence as a dynamical system on $\path$. In particular, there is a \emph{shift map} $\sigma:\path\to\path$ which corresponds to deleting the first vertex in a path.

\begin{theorem}
\label{prop:paths}
Let $C$ be a correspondence on the $S$-scheme $X$. Then there exists a separated, integral $S$-scheme $\path$, surjective morphisms $\pi:\path\to X$, $\epsilon:\path\to C$, and a finite surjective morphism $\sigma:\path\to \path$ making the following diagram commute.
\[\xymatrix{
\path \ar[d]_{\pi} \ar[rr]^{\sigma} \ar[dr]^{\epsilon} &  &  \path \ar[d]^\pi\\  
X & C \ar[l]^x \ar[r]_y & X
}\]
\end{theorem}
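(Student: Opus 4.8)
The plan is to build $\path$ as an inverse limit of finite-length path schemes. First I would define, for each $n\geq 0$, the scheme $\path_n$ parametrizing paths with $n$ edges (i.e.\ $n+1$ vertices): set $\path_0 = X$, $\path_1 = C$, and in general
\[
\path_{n} = C \times_{y,\, X,\, x} C \times_{y,\, X,\, x} \cdots \times_{y,\, X,\, x} C \qquad (n\text{ copies of }C),
\]
the iterated fibre product in which the second projection $y$ of one copy of $C$ is glued to the first projection $x$ of the next. These fibre products exist in the category of $S$-schemes, and since $x,y$ are finite and surjective, each transition morphism $\path_{n+1}\to\path_n$ that forgets the last edge is finite and surjective. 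I would then take $\path = \varprojlim_n \path_n$, which exists as an $S$-scheme because the transition maps are affine (being finite); this is the standard fact that an inverse limit of schemes along affine transition morphisms exists in the category of schemes. The morphism $\pi:\path\to X=\path_0$ and $\epsilon:\path\to C=\path_1$ are the canonical projections from the limit, and the shift $\sigma:\path\to\path$ is induced by the compatible family of maps $\path_{n+1}\to\path_n$ that forget the \emph{first} edge rather than the last; one checks these commute with the transition morphisms so they pass to the limit. Commutativity of the displayed diagram ($\pi\circ\sigma = y\circ\epsilon$ and $\pi = x\circ\epsilon$) is immediate from the definitions of the forgetting maps on each $\path_n$.

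The substantive points to verify are the three adjectives — separated, integral, finite/surjective for $\sigma$ — and surjectivity of $\pi,\epsilon$. Separatedness is inherited: each $\path_n$ is separated over $S$ (fibre products and finite morphisms preserve separatedness), and an inverse limit of separated schemes along affine maps is separated. Surjectivity of $\pi$ and $\epsilon$ follows because each transition map $\path_{n+1}\to\path_n$ is surjective (as $x,y$ are surjective and finite, hence the fibre product projections are surjective), and a filtered inverse limit of nonempty quasi-compact schemes along surjective affine transition maps has nonempty fibres over each point of $\path_0$ — this is where finiteness of $x,y$ really earns its keep, since it makes the relevant fibres finite nonempty schemes and the inverse limit of a tower of finite nonempty sets is nonempty. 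For $\sigma$ finite and surjective: on each finite level the ``forget the first edge'' map $\path_{n+1}\to\path_n$ is a base change of $C\xrightarrow{x}X$ (one is adjoining one more edge at the front, constrained by $x$), hence finite and surjective; finiteness and surjectivity are preserved under the affine inverse limit, giving $\sigma$ finite surjective on $\path$.

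The main obstacle is \textbf{integrality} of $\path$. Irreducibility and reducedness do not automatically survive an infinite inverse limit, so this needs a genuine argument rather than a formal inheritance. I expect the right approach is to work at the generic point: since $X$ is integral with function field $K(X)$, and $x,y:C\to X$ are finite, the generic fibre consideration reduces integrality of each $\path_n$ to a statement about the tower of finite field extensions (or finite reduced algebras) obtained by repeatedly pulling back along $x$ and pushing along $y$. Concretely, I would show each $\path_n$ is integral by induction — $\path_{n+1} = \path_n \times_{X} C$ along the map $\path_n\to X$ given by ``last vertex'' composed with... — being careful that the fibre product of integral schemes need not be integral, so one genuinely uses that the relevant morphism is, say, flat with integral fibres, or argues via the function field $K(\path_n)$ and the fact that $C$ is integral to see that $\path_n\times_X C$ has a unique minimal prime whose residue field is the compositum; one then takes its reduction or shows it is already reduced. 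Granting each $\path_n$ integral, the limit $\path$ has a function field $K(\path) = \varinjlim K(\path_n)$ (a direct limit of fields, hence a field), and $\path$ is integral because its structure sheaf is a subring of the constant sheaf $K(\path)$ — the affine transition maps let one check this on affine charts, where $\Gamma(\path|_{\text{chart}}) = \varinjlim \Gamma(\path_n|_{\text{chart}})$ is a filtered colimit of domains inside a field, hence a domain. So the crux is the inductive integrality of the finite-level path schemes, and the honest handling of when a fibre product of integral schemes stays integral.
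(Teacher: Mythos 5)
Your construction of $\path$ as the inverse limit of the finite-length path schemes, with the transition maps forgetting the last edge and $\sigma$ induced by forgetting the first, is exactly the route the paper takes; the diagram-chase identifying $\sigma$ as a base change is also present, though you should note that prepending an edge at the front constrains the \emph{$y$-coordinate} of the new edge to match the old initial vertex, so $\sigma$ is pulled back from $C\xrightarrow{\,y\,}X$, not from $x$ (this is harmless since both $x$ and $y$ are finite surjective, but worth getting right). Your handling of surjectivity via compact nonempty fibres in a filtered inverse system is also the intended argument.

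Where you part company with the paper is instructive. The paper simply asserts that each $\path_n$ is integral ``by the standard properties of fibre products'' and that integrality passes to the limit; you flag this as a genuine gap rather than a formality, and you are right to do so. Fibre products of integral schemes over an integral base, even along finite surjective maps, are not integral in general. A concrete instance within the paper's own hypotheses: take $X=\PP^1$ and $C$ the (closure of the) conic $w^2+z^2=1$ in $\PP^1\times\PP^1$, with $x, y$ the two coordinate projections (each of degree $2$). Then the scheme of paths of length two, $C\times_{y,\,X,\,x}C$, is cut out by $a_0^2+a_1^2=1$ and $a_1^2+a_2^2=1$, which forces $a_0^2=a_2^2$ and hence splits into the two components $a_0=a_2$ and $a_0=-a_2$. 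So $\path_n$ can genuinely fail to be irreducible, the inductive claim in the paper's proof is false as stated, and the additional hypothesis you were groping for (flatness with integral fibres, or some linear-disjointness condition on the relevant function-field extensions) really is needed to salvage ``integral.'' The remaining properties you list — separatedness, and $\sigma$ finite surjective as an affine inverse limit of base changes of a finite surjective map — you handle exactly as the paper does and these are fine.
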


For a given path $P\in\path$, we should think of $\pi(P)\in X$ as the initial vertex of $P$, $\epsilon(P)\in C$ as the initial edge, and $\sigma(P)\in\path$ as the path obtained by forgetting both of these (or shifting along the path). Thus, the commutative diagram in the statement of the theorem says roughly that the terminal vertex of the initial edge of a path is the initial vertex of the path shifted once (and that the initial vertex of the initial edge is also the initial vertex of the entire path).

\begin{proof}[Proof of Theorem~\ref{prop:paths}]
We define a sequence of $S$-schemes $\path_n$, along with morphisms $\pi_n, \tau_n:\path_n\to X$, as follows. First, we take $\path_1=X$, with $\pi_1=\tau_1$ the identity map. Now, for each $n$, we define $\path_{n+1}$ to be the base extension
\[\xymatrix{
\path_{n+1} \ar[r]^{\beta_n} \ar[d]_{\alpha_n} &  C \ar[d]^x\\ 
\path_n \ar[r]_{\tau_n}  &  X 
}\]
The maps $\pi_{n+1}, \tau_{n+1}$ are defined by  $\tau_{n+1}=y\circ \beta_n$, and $\pi_{n+1}=\pi_n\circ\alpha_n$.

Intuitively, $\path_n$ parametrizes paths of length $n$, with initial and terminal vertices given by $\pi_n$ and $\tau_n$, respectively. By the fibre product construction, $\path_{n+1}$ parametrizes pairs consisting of a path of length $n$ and an edge, such that the terminal vertex of the path matches the initial vertex of the edge; in other words, a path of length $n+1$.
Note that, by the standard properties of fibre products, each $\path_n$ is a separated, integral scheme of finite type over $S$. 

Now consider the inverse system of $S$-schemes
\[\xymatrix{
\cdots \ar[r]^{\alpha_{n+1}} & \path_{n+1} \ar[r]^{\alpha_n} & \cdots   \ar[r]^{\alpha_{2}}  & \path_2 \ar[r]^{\alpha_1} &\path_1,
}\]
and let $\path=\varprojlim \path_n$ (see \cite[Chapter~31]{stacks-project}). Note that $\path$ is not, in general, of finite type over $S$.  That $\path$ is a separated, integral $S$-scheme follows simply from the fact that these properties are preserved under base extension and taking limits.

The inverse limit construction gives morphisms $\path\to\path_n$ for every $n$, which are surjective and commute with the transition maps $\alpha_n$. Since $\path_1\cong X$ and $\path_2\cong C$, we thus have surjective maps $\pi:\path\to X$ and $\epsilon:\path\to C$ which satisfy $\pi=x\circ\epsilon$ (the left-hand-side of the claimed commutative diagram).

It remains to construct $\sigma:\path\to\path$. Note that, by diagram chasing, we have $C\times_X\path_n\cong \path_{n+1}$, with respect to the maps $y:C\to X$ and $\pi_n:\path_n\to X$. In other words, paths of length $n+1$ might just as well be constructed from paths of length $n$ by adding an edge at the beginning. Note that the resulting map $\path_{n+1}\to C$ corresponds to $\alpha_2\circ\cdots\circ\alpha_n$ under the association $C\cong \path_2$. Taking limits, this gives (by \cite[Chapter~39, Lemma~2.3]{stacks-project})
\[C\times_X \path = C\times_X\varprojlim \path_n = \varprojlim \left(C\times_X \path_n\right)\cong \varprojlim\path_{n+1}=\path,\]
where the latter is viewed as an $X$-scheme by $\pi:\path\to X$, and $C$ is viewed as an $X$-scheme by $y:C\to X$. Furthermore, the implied map $C\times_X \path \to C$ coincides, from the construction, with $\epsilon:\path\to C$.  In other words, we have a base extension diagram 
\[\xymatrix{
\path \ar[d]_{\epsilon} \ar[r] &\path \ar[d]^{\pi} \\
C \ar[r]_y & X,
}\]
and we call the top morphism $\sigma$. The fact that $\sigma:\path\to\path$ is finite, separated, and surjective follows from it being a base extension of a finite, surjective morphism of varieties.

\end{proof}

The proof of the finiteness of $\sigma:\path\to\path$ belies the simplicity of the motivation. Essentially, given a path $Q$ there should exist only finitely many paths $P$ with $\sigma(P)=Q$ simply because there are only finitely many points on $X$ which could be the initial vertex of a path whose second vertex is $\pi(Q)$. This follows just because the two projection maps $C\to X$ are finite.

In deterministic dynamics, the notion of preperiodicity is very important. There are numerous possible generalizations of this to the dynamics of correspondences, and we name three.
\begin{definition}
Let $X$ be a projective variety, let $C$ be a correspondence on $X$, let $\path$ be the associated path space, and let $P\in\path$. We say that $P$ is
\begin{enumerate}
\item \emph{preperiodic} if and only if there exist $n\neq m$ with $\sigma^n(P)=\sigma^m(P)$;
\item \emph{finitely supported} if and only if the quantity $\pi\circ\sigma^n(P)$ takes only finitely many values as $n$ varies; and
\item \emph{repetitive} if and only if there exist $n\neq m$ with $\pi\circ\sigma^n(P)=\pi\circ\sigma^m(P)$.
\end{enumerate}
We say that $a\in X$ is \emph{constrained} relative to the correspondence $C$ if and only if there exists a repetitive path $P\in\pi^{-1}(a)$.
\end{definition}

\begin{remark}
It is easy to see that every preperiodic path is finitely supported, and every finitely supported path is repetitive, but in general the three conditions are not equivalent (as they are in the deterministic case). Example~\ref{ex:binary} contains paths which are finitely supported but not preperiodic, while Example~\ref{ex:heights} below gives a path which is repetitive but not finitely supported. Note, however, that the condition that there exists a repetitive path $P$ with $\pi(P)=a$ \emph{is} equivalent to the condition that there exists a preperiodic (and hence finitely supported) path $P$ with $\pi(P)=a$.
\end{remark}

\begin{remark}
For each $n\geq 2$, let $\path_{n}$ be the $S$-scheme parametrizing paths of length $n$ as above, with initial and terminal vertices given by the maps $\pi_n, \tau_n:\path_n\to X$. This gives a morphism $(\pi_n, \tau_n):\path_n\to X\times_S X$. The points of $X$ which land in the image of this morphism under the diagonal embedding $X\to X\times_S X$ are precisely the initial vertices of cyclic paths of length $n-1$. Since we should expect $\path_n$ to have the same dimension as $X$, one should typically expect this intersection to be 0-dimensional, but additional hypotheses are needed to make this concrete.
\end{remark}


\section{The global canonical height}\label{sec:globcan}

Let $X$ be a projective variety over $K$, and let $C$ be a correspondence over $X$. Recall that $C$ is \emph{polarized} if and only if there is an ample (Cartier) divisor $L\in\Div(X)\otimes\RR$ and a real number $\alpha>1$ such that $y^*L\sim \alpha x^*L$, where $x, y:C\to X$ are the coordinate projections. In this section we will prove Theorem~\ref{th:canheight}.

Before proceeding with the proof, we note that nowhere do we use the condition that $L$ is ample. This condition arises in Corollary~\ref{cor:finitenessforrepetitive}, and is likely to play a role in the determination of where the canonical height vanishes.

\begin{proof}[Proof of Theorem~\ref{th:canheight}]
Let $P\in \path(\overline{K})$.
By the standard properties of Weil heights~\cite[Theorem~B.3.2, p.~184]{hind-silv}, and the commutative diagram in Proposition~\ref{prop:paths}, we have
\begin{eqnarray*}
h_{X, L}\left(\pi\circ\sigma(P)\right)&=&h_{X, L}(y\circ\epsilon(P))\\
&=&h_{C, y^*L}(\epsilon(P))+O(1)\\
&=&\alpha h_{C, x^*L}(\epsilon(P))+O(1)\\
&=&\alpha h_{X, L}(x\circ \epsilon(P))+O(1)\\
&=&\alpha h_{X, L}(\pi(P))+O(1).
\end{eqnarray*}
(Note in particular, that although $P$ is not a point on a projective variety, both $\pi(P)$ and $\epsilon(P)$ are, and so we are not applying the height machine out of context.) Thus it follows that
\[\left|\alpha^{-1}h_{X, L}(\pi\circ\sigma(P))-h_{X, L}(\pi(P))\right|\leq \kappa\]
for some constant $\kappa\geq 0$, for all $P\in\path(\overline{K})$. We now apply a standard telescoping sum argument, due to Tate (seen also in \cite{callsilv}). By the triangle inequality, we have
\begin{equation}\label{eq:tatebound}\left|\alpha^{-n}h_{X, L}(\pi\circ\sigma^n(P))-h_{X, L}(\pi(P))\right|\leq \left(1+\alpha^{-1}+\cdots+\alpha^{-n+1}\right)\kappa\leq \left(\frac{\alpha}{\alpha-1}\right)\kappa.\end{equation}
Applying this with $\sigma^m(P)$ in place of $P$, we have
\[\left|\alpha^{-(m+n)}h_{X, L}(\pi\circ\sigma^{m+n}(P))-\alpha^{-m}h_{X, L}(\pi\circ\sigma^m(P))\right|\leq \alpha^{-m} \left(\frac{\alpha}{\alpha-1}\right)\kappa.\]
This shows that the sequence $\alpha^{-n}h_{X, L}(\pi\circ\sigma^n(P))$ of real numbers is Cauchy,
whereupon the limit
\[\hat{h}_{X, C, L}(P)=\lim_{n\to\infty}\alpha^{-n}h_{X, L}(\pi\circ\sigma^n(P))\]
exists.  The first property in the theorem follows immediately from the definition, while the second follows by taking the limit of \eqref{eq:tatebound} as $n\to\infty$.
\end{proof}

It is easy to see from the definition that $\hat{h}_{X, C, L}(P)=0$ when $P$ is finitely supported (and so when $P$ is preperiodic), although this is not generally the case when $P$ is merely repetitive, as the following example shows.
\begin{example}\label{ex:heights} Let $C\subseteq \PP^1\times \PP^1$ be the correspondence defined by (the closure of) the curve $y^2=x^3+1$. This correspondence is polarized, with $L$ any ample divisor on $\PP^1$ and $\alpha=3/2$.

 It is not hard to check, by making explicit the bound on the difference between the canonical height and the Weil height, that any path $P$ which begins as
\[P:0\to 1\to 2^{1/2}\to (2^{3/2}+1)^{1/2}\to\cdots\]
satisfies $\hat{h}_{X, C, L}(P)>0$. But now note that the repetitive path
\[Q:0\to -1 \to 0 \to 1\to 2^{1/2}\to \cdots \]
satisfying $\sigma^2(Q)=P$ must have $\hat{h}_{X, C, L}(Q)=\frac{4}{9}\hat{h}_{X, C, L}(P)>0$. Indeed, we may use this example  to produce paths $Q\in\path(\overline{\QQ})$ of arbitrarily small positive canonical height with $\pi(Q)\in\PP^1(\QQ)$ (indeed, with $\pi(Q)=0$). This is a phenomenon not present in deterministic arithmetic dynamics.
\end{example}

\begin{remark}
\`{A} propos of the discussion that many interesting $\overline{K}$-points of $\path$ are not defined over any number field, it is worth noting that a path $P\in \path(\overline{K})$ is finitely supported if and only if we have both $\hat{h}_{X, C, L}(P)=0$ and $P\in \path(E)$ for some finite extension $E/K$. The question of when the vanishing of the canonical height implies finite support of a path is thus intimately linked to the question of which paths are defined over some number field. Note that if $\hat{h}_{X, C, L}(P)=0$ but $P$ is not $E$-rational for any finite $E/K$, then the points $\pi\circ\sigma^n(P)$ have unbounded algebraic degree, and height zero with respect to a certain Weil height on $X$; these are precisely the antecedent conditions of many equidistribution results.
\end{remark}

In general, it is not clear what conditions are necessary for $\hat{h}_{X, C, L}(P)=0$, even over number fields (as opposed to functions fields, in which context we expect the answer to be even more subtle \cite{baker, benedetto}). For instance, let $X=\PP^1$ and let $C\subseteq \PP^1\times\PP^1$ be the correspondence defined by the closure of $y^2=x^3$. Then any $P\in\path(\overline{\QQ})$ with $\pi(P)$ a root of unity satisfies $\hat{h}_{X, C, L}(P)=0$, but there are certainly non-repeating paths of this form.

More generally, if $f, g:X\to X$ are two finite morphisms whose set of preperiodic points coincide, then the correspondence defined on $X$ by $g(y)=f(x)$ admits non-repeating paths of canonical height zero. In light of characterizations such as those in \cite[Theorem~1.2]{bakerdemarco}, it would be of interest to determine whether or not these are the only sorts of examples.

Before concluding this section, we note that there is a natural topology on $\path(\overline{K})$, which we will call the \emph{tree topology}. In particular, equip $\path_n(\overline{K})$ with the discrete topology for all $n$, and define the tree topology on $\path_n(\overline{K})$ to be the weakest which makes each of the projection maps $\path(\overline{K})\to\path_n(\overline{K})$ continuous. A base for this topology is simply the collection of sets
\[U_{P, n}=\left\{Q\in \path(\overline{K}):\pi\circ\sigma^m(Q)=\pi\circ\sigma^m(P)\text{ for all }0\leq m\leq n\right\},\]
for $n\geq 0$ and $P\in\path(\overline{K})$. It is natural to ask how the canonical height behaves with respect to this topology.
\begin{proposition}
The function $\hat{h}_{X, C, L}:\path(\overline{K})\to \RR$ is continuous with respect to the tree topology, and each fibre $\path_a(\overline{K})=\pi^{-1}(a)\subseteq \path(\overline{K})$ is  compact. 
\end{proposition}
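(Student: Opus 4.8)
The plan is to deduce both assertions from the uniform Tate bound~\eqref{eq:tatebound} established in the proof of Theorem~\ref{th:canheight}, together with the combinatorial structure of the tree topology. First I would prove compactness of the fibre $\path_a(\overline{K})=\pi^{-1}(a)$. The key observation is that a point of $\path(\overline{K})$ is the data of a compatible sequence of points $P_n\in\path_n(\overline{K})$ under the transition maps $\alpha_n$, and that the two projections $C\to X$ being finite means each $\alpha_n:\path_{n+1}\to\path_n$ has finite fibres on $\overline{K}$-points (of size at most $\deg(x)$). Thus $\path_a(\overline{K})$ is a closed subset of the product $\prod_n F_n$, where $F_n$ is the (finite, discrete) set of $\overline{K}$-points of $\path_n$ lying over $a$ under $\pi_n$; being a closed subset of a product of finite discrete spaces, it is compact by Tychonoff. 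I would phrase this carefully: the tree topology on $\path(\overline{K})$ is by definition the subspace topology from $\prod_n \path_n(\overline{K})$ with each factor discrete, and the compatibility conditions $\alpha_n(P_{n+1})=P_n$ are closed conditions, so $\pi^{-1}(a)$ is closed in the compact space $\prod_n F_n$.

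For continuity of $\hat h_{X,C,L}$ in the tree topology, the idea is that the basic open set $U_{P,n}$ consists of paths agreeing with $P$ through the first $n$ shifts, hence with $\pi\circ\sigma^n(Q)=\pi\circ\sigma^n(P)$ on the nose for $Q\in U_{P,n}$. Applying~\eqref{eq:tatebound} to $\sigma^n(Q)$ (as in the derivation of the Cauchy estimate in the proof of Theorem~\ref{th:canheight}) gives
\[
\bigl|\hat h_{X,C,L}(Q)-\alpha^{-n}h_{X,L}(\pi\circ\sigma^n(Q))\bigr|\le \alpha^{-n}\Bigl(\tfrac{\alpha}{\alpha-1}\Bigr)\kappa
\]
for every $Q$, where $\kappa$ is the uniform constant from the proof of Theorem~\ref{th:canheight}. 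Since $\pi\circ\sigma^n(Q)=\pi\circ\sigma^n(P)$ for $Q\in U_{P,n}$, the quantity $\alpha^{-n}h_{X,L}(\pi\circ\sigma^n(Q))$ is constant on $U_{P,n}$, so $\hat h_{X,C,L}$ varies by at most $2\alpha^{-n}\bigl(\tfrac{\alpha}{\alpha-1}\bigr)\kappa$ on $U_{P,n}$. Given $\epsilon>0$, choosing $n$ large enough that this bound is below $\epsilon$ shows $\hat h_{X,C,L}$ is constant to within $\epsilon$ on a neighbourhood of any point, i.e.\ continuous.

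I expect the main subtlety to be bookkeeping rather than conceptual depth: one must be slightly careful that the constant $\kappa$ in~\eqref{eq:tatebound} is genuinely uniform over all of $\path(\overline{K})$ (which it is, since it comes from the $O(1)$ in the height-machine comparison on the projective varieties $X$ and $C$, not from anything depending on the path), and that applying the estimate with $\sigma^n(Q)$ in place of $P$ is legitimate — this is exactly the manipulation already used to produce the Cauchy sequence, so it is available. A secondary point worth stating explicitly is that the finiteness $|F_n|\le (\deg x)^{n-1}$ of the fibres, which makes Tychonoff applicable in the needed form, is precisely the content of the finiteness of $\sigma:\path\to\path$ noted after Theorem~\ref{prop:paths}; no new input is required.
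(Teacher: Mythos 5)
Your argument is correct and is essentially the one in the paper: compactness via the inverse limit of the finite discrete fibres $\path_{a,n}(\overline{K})$ (which is exactly what your Tychonoff phrasing gives), and continuity by applying the uniform bound $|\hat h_{X,C,L}(R)-h_{X,L}(\pi(R))|\le\kappa'$ at $\sigma^n(P)$ and $\sigma^n(Q)$ together with $\pi\circ\sigma^n(Q)=\pi\circ\sigma^n(P)$ on $U_{P,n}$. One small slip in the commentary: the finiteness of the fibres $F_n$ of $\pi_n$ comes from $x:C\to X$ being finite (each $\alpha_n$ is a base change of $x$), not from the finiteness of $\sigma$, which instead reflects $y$ being finite; both are hypotheses of the setup, so the proof is unaffected.
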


\begin{proof}
The compactness of $\path_a(\overline{K})$ can be obtained as a general fact about inverse limits of compact spaces, since $\path_a(\overline{K})$ is the inverse limit of $\path_{a, n}(\overline{K})$, a finite discrete space for each $n$.

Note that we have constructed, above, a constant $\kappa'\geq 0$ such that for any $P\in \path(\overline{K})$, we have
\[\left|\hat{h}_{X, C, L}(P)-h_{X, L}(\pi(P))\right|\leq \kappa'.\]
Now, supposing that $Q\in U_{P, n}$, we have
\begin{eqnarray*}
\left|\hat{h}_{X, C, L}(P)-\hat{h}_{X, C, L}(Q)\right|&=&\alpha^{-n}\left|\hat{h}_{X, C, L}(\sigma^n(P))-\hat{h}_{X, C, L}(\sigma^n(Q))\right|\\
&\leq &\alpha^{-n}\left|h_{X, L}(\pi\circ\sigma^n(P))-h_{X, L}(\pi\circ\sigma^n(Q))\right|\\
&&+\alpha^{-n}\left|\hat{h}_{X, C, L}(\sigma^n(P))-h_{X, L}(\pi\circ\sigma^n(P))\right|\\
&&+\alpha^{-n}\left|\hat{h}_{X, C, L}(\sigma^n(Q))-h_{X, L}(\pi\circ\sigma^n(Q))\right| \\
&\leq& 2\alpha^{-n}\kappa',
\end{eqnarray*}
since $\pi\circ\sigma^n(Q)=\pi\circ\sigma^n(P)$. In other words, given any $\epsilon>0$ there exists an $n$ such that
\[\left|\hat{h}_{X, C, L}(P)-\hat{h}_{X, C, L}(Q)\right|<\epsilon\]
for all $Q\in U_{P, n}$, whence the claim.
\end{proof}

\begin{remark}
In single-valued arithmetic dynamics \cite{callsilv}, the canonical height is defined on the base variety $X$, where here we have defined it on the path space $\path$.
In light of the above proposition, it is natural to define two real-valued functions on $X(\overline{K})$ itself by
\begin{gather*}
\hat{h}_{X, C, L, \mathrm{max}}(a)=\max\left\{\hat{h}_{X, C, L}(P):\pi(P)=a\right\}\\
\hat{h}_{X, C, L, \mathrm{min}}(a)=\min\left\{\hat{h}_{X, C, L}(P):\pi(P)=a\right\}.\end{gather*}
It is clear from Theorem~\ref{th:canheight} that both of these differ from any Weil height (on $X$ relative to $L$) by a bounded amount, and many interesting questions about the canonical height can phrased in terms of $\hat{h}_{X, C, L, \mathrm{min}}$ in particular.
\end{remark}


\section{Specialization}\label{sec:var}

We now state the specialization theorem, for which we must set the stage.
Let $K$ be a number field, let $U/K$ be a curve, and let $k=\overline{K}$. By a \emph{family of polarized correspondences} over $U$, we mean simply a variety $X\to U$ with a correspondence $C\subseteq X\times_U X$, a divisor $L\in \Div(X)\otimes \RR$, and a real number $\alpha>1$ such that $y^* L-\alpha x^*L$ is linearly equivalent to a divisor fibral to the projection $C\to U$. This is equivalent to requiring the generic fibre in the family to be a polarized correspondence.   From this definition it follows easily that there is an affine open subset $U_0\subseteq U$ such that for all $t\in U_0$ the triple of fibres $(X_t, C_t, L_t)$ defines a polarized correspondence as defined above. We will always take $U_0=U$, paring down to an open subset when necessary, although when we speak of heights on $U$ we of course mean heights on a projective curve birational to $U$.

From Section~\ref{sec:geom}, we can construct the path space $\path$ of the correspondence as a $U$-scheme. Just as in the arithmetic case, however, one should expect $\path(U)$ to be rather small and uninteresting, and similarly for $\path(V)$ for any $V\to U$ of finite type. 

 In analogy to the arithmetic case, let $\phi:\mathscr{U}\to U$ be a scheme not necessarily of finite type over $U$, with $\phi$ dominant, and consider paths $P\in \path(\mathscr{U})$ such that $\pi(P)\in X(U)$ (in other words, such that the map $\pi(P):\mathscr{U}\to X$ factors through $U$). We can specialize such a path at points $t\in\mathscr{U}(k)$, and ask how $\hat{h}_{X_{\phi(t)}, C_{\phi(t)}, L_{\phi(t)}}(P_t)$ varies as a function of $t$.  We should note that the relative non-specificity of $\mathscr{U}$ is misleading. Let $\mathscr{U}'\to U$ be the limit of the inverse system of curves $V$ with a finite map $V\to U$. Then it is straightforward to show that any $P:\mathscr{U}\to\path$ such that $\pi(P):\mathscr{U}\to X$ factors through $U$, must in turn factor through some $P':\mathscr{U}'\to \path$. Without loss of generality, then, we will always suppose that $\mathscr{U}=\mathscr{U'}$. This is the geometric equivalent of working over $\overline{K}$, rather than an arbitrary field extension of $K$.

Recall that the generic fibre (over $U$) of a family of correspondences as above is a polarized correspondence defined over the function field $k(U)$, which we denote by $X_\eta, C_\eta, L_\eta$. There is a natural height on $X_\eta(\overline{k(U)})$ relative to $L_\eta$, corresponding on $k(U)$ to the degree of a function on $U$, and it is easy to check that the construction of the canonical height in Section~\ref{sec:globcan}  works in this setting as well. In particular, given any finite cover $V\to U$ and any morphism $f:V\to X$, we have a corresponding point $f_\eta\in X_\eta(\overline{k(U)})$ given by some embedding $k(V)\to \overline{k(U)}$, and we define $h_{X_\eta, L_\eta}(f_\eta)$ accordingly; although the notation $f_\eta$ is ambiguous, depending on the choice of embedding, the height is well-defined. We denote by $\hat{h}_{X_{\phi(\eta)}, C_{\phi(\eta)}, L_{\phi(\eta)}}(P_\eta)$ the canonical height as computed on the generic fibre (noting that, since $\phi$ is dominant and $\eta$ is the generic point of $\mathscr{U}$, $\phi(\eta)$ is the generic point of $U$).

Theorem~\ref{th:specialization}, generalizing a result of the same name due to Call and Silverman~\cite{callsilv}, establishes that the canonical height varies nicely in such a family.
\begin{theorem}\label{th:specialization}
In the setting above, let $h_U$ be a Weil height on $U$ corresponding to any divisor class of degree $1$. Then
\[\hat{h}_{X_{\phi(t)}, C_{\phi(t)}, L_{\phi(t)}}(P_t)=\left(\hat{h}_{X_{\phi(\eta)}, C_{\phi(\eta)}, L_{\phi(\eta)}}(P_\eta)+o(1)\right)h_U(\phi(t)),\]
where $o(1)\to 0$ as $h_U(\phi(t))\to \infty$. In particular, if $\hat{h}_{X_{\phi(\eta)}, C_{\phi(\eta)}, L_{\phi(\eta)}}(P_\eta)>0$, then the set
\[\{\phi(t):P_t\text{ is finitely supported}\}\subseteq U(k)\]
is a set of bounded height.
\end{theorem}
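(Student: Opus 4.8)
The plan is to mimic the Call--Silverman specialization argument, with the path space playing the role of the base variety. The key point is that all of the constants appearing in the construction of the canonical height in Section~\ref{sec:globcan} can be chosen uniformly in a family, so that the ``error term'' $\hat{h}_{X_t,C_t,L_t}(P_t)-h_{X_t,L_t}(\pi(P_t))$ is bounded by a constant times $h_U(\phi(t))$ raised to a power strictly less than one, or more precisely is $o(h_U(\phi(t)))$. I would proceed as follows.

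First I would set up the comparison between the specialized Weil heights and the height on the generic fibre. Fix a projective model of the family and an embedding realizing $L$ via sections; then for $t\in\mathscr{U}(k)$ the point $\pi(P_t)\in X(\overline{K})$ has a Weil height $h_{X_{\phi(t)},L_{\phi(t)}}(\pi(P_t))$, and the standard height-of-specialization estimate (as in \cite[Theorem~4.1]{callsilv}, or Silverman's specialization results on which it is modeled) gives
\[
h_{X_{\phi(t)},L_{\phi(t)}}(\pi(P_t)) = h_{X_\eta,L_\eta}(\pi(P)_\eta)\,h_U(\phi(t)) + O\!\left(\sqrt{h_U(\phi(t))}\right)
\]
or at worst $+o(h_U(\phi(t)))$; the same holds with $\pi(P)$ replaced by $\pi\circ\sigma^n(P)$, but now I need the implied constant to be independent of $n$, which is the crux of the matter. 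This uniformity follows because $\pi\circ\sigma^n(P):\mathscr{U}\to X$ is built from $\pi(P)$ and $n$ applications of maps that are all fibral-to-$U$ morphisms of a single fixed family, so the coefficients defining these morphisms have bounded height contributions — only the leading term $\hat{h}_{X_\eta,C_\eta,L_\eta}(\sigma^n(P)_\eta)=\alpha^n\hat{h}_{X_\eta,C_\eta,L_\eta}(P_\eta)$ grows.

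Next I would run Tate's telescoping argument ``in the family.'' From the uniform bound
\[
\left|\alpha^{-1}h_{X_{\phi(t)},L_{\phi(t)}}(\pi\circ\sigma(P_t)) - h_{X_{\phi(t)},L_{\phi(t)}}(\pi(P_t))\right| \le \kappa\,\psi(h_U(\phi(t))),
\]
where $\psi(B)=o(B)$ is the uniform error function, the same computation as in~\eqref{eq:tatebound} gives
\[
\left|\hat{h}_{X_{\phi(t)},C_{\phi(t)},L_{\phi(t)}}(P_t) - h_{X_{\phi(t)},L_{\phi(t)}}(\pi(P_t))\right| \le \left(\tfrac{\alpha}{\alpha-1}\right)\kappa\,\psi(h_U(\phi(t))).
\]
Combining this with the specialization estimate for $h_{X_{\phi(t)},L_{\phi(t)}}(\pi(P_t))$ yields
\[
\hat{h}_{X_{\phi(t)},C_{\phi(t)},L_{\phi(t)}}(P_t) = \hat{h}_{X_\eta,C_\eta,L_\eta}(P_\eta)\,h_U(\phi(t)) + o(h_U(\phi(t))),
\]
which is the assertion of Theorem~\ref{th:specialization} after dividing by $h_U(\phi(t))$; the displayed limit formula is then immediate. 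For the final ``in particular'' clause: if $P_t$ is finitely supported then $\hat{h}_{X_{\phi(t)},C_{\phi(t)},L_{\phi(t)}}(P_t)=0$, so the asymptotic forces $h_U(\phi(t))\le c$ for a constant $c$ depending only on the family and on $\hat{h}_{X_\eta,C_\eta,L_\eta}(P_\eta)>0$; hence $\{\phi(t):P_t\text{ finitely supported}\}$ has bounded height. The Corollary follows by Northcott over $U(K)$.

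The main obstacle is the uniformity of the error term in $n$ — i.e., making rigorous that the constant $\kappa$ in the family version of the ``$\hat{h}-h$ is bounded'' estimate does not degrade as one shifts along the path. I would handle this by noting that $\pi\circ\sigma^n(P)$ factors through $\path_{?}$ in a controlled way: the relation $h_{X,L}\circ y\circ\epsilon = \alpha\,h_{X,L}\circ x\circ\epsilon + O(1)$ holds on $C$ with a single error constant (over the family, an $O(\psi(h_U))$ term coming from $y^*L-\alpha x^*L$ being linearly equivalent to a fibral divisor, whose specialized height is $O(h_U(\phi(t))^{0})$ up to the contribution of its ``horizontal'' part — in fact it is $O(1)$ in $t$ once one trivializes the fibral divisor), and pulling back along $\sigma^n$ reuses the same constant. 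One must be a little careful that $\epsilon\circ\sigma^{n-1}:\path\to C$ is a single morphism of the family for each $n$, but the family of these morphisms is ``bounded'' because each is a base change of a fixed correspondence; I would make this precise by working on the finite-level schemes $\path_n$ over $U$ and invoking that $(\pi_n,\tau_n):\path_n\to X\times_U X$ is a morphism of finite-type $U$-schemes, applying the (single-valued) specialization theorem there, and only then passing to the limit.
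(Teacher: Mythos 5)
Your proposal does not go through as written, and the gap is exactly at the spot you flagged. You claim the one-step specialization error
\[
\left|\alpha^{-1}h_{X_{\phi(t)},L_{\phi(t)}}\bigl(\pi\circ\sigma(P_t)\bigr) - h_{X_{\phi(t)},L_{\phi(t)}}\bigl(\pi(P_t)\bigr)\right|
\]
is $o\bigl(h_U(\phi(t))\bigr)$, and in the parenthetical you assert it is even ``$O(1)$ in $t$ once one trivializes the fibral divisor.'' This is false in general. The hypothesis is that $y^*L-\alpha x^*L$ is linearly equivalent to a divisor $u^*D$ with $D$ a divisor on (the completion of) $U$, and one then picks an ample $H$ on $\tilde U$ with $\pm D \leq nH$; the height of a specialization is therefore controlled by $n\,h_{\tilde U,H}(t)+O(1)$, which is $O\bigl(h_U(t)\bigr)$ \emph{and not better} unless $\deg D = 0$, which the hypotheses do not guarantee. (On each fixed fibre $C_t$ the discrepancy $y_t^*L_t-\alpha x_t^*L_t$ is indeed principal, but the constant implicit in that relation changes with $t$, and that variation is precisely the $O(h_U(t))$ you need to estimate; ``trivializing the fibral divisor'' does not make it disappear.) Consequently your telescoping argument only yields $\bigl|\hat{h}_{X_{\phi(t)},C_{\phi(t)},L_{\phi(t)}}(P_t) - h_{X_{\phi(t)},L_{\phi(t)}}(\pi(P_t))\bigr| = O\bigl(h_U(\phi(t))\bigr)$, and after combining with the second-type specialization estimate you obtain $\hat{h}_{X_{\phi(t)},C_{\phi(t)},L_{\phi(t)}}(P_t) = \hat{h}_{X_{\phi(\eta)},C_{\phi(\eta)},L_{\phi(\eta)}}(P_\eta)\,h_U(\phi(t)) + O\bigl(h_U(\phi(t))\bigr)$, in which the error term does not vanish after dividing by $h_U$.

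The paper escapes this trap without any $o(h_U)$ estimate and without uniformity in $n$ of the Weil-height specialization lemma. The two ingredients it uses are: (i) the telescoping bound $\bigl|\hat{h}-h\circ\pi\bigr|\leq c_1 h_U(t)+c_2$ is uniform in the \emph{path} $P$ (and the constants are $O(h_U)$, not better); and (ii) the exact functional equation $\hat{h}(\sigma^n P)=\alpha^n\hat{h}(P)$, valid both on fibres and on the generic fibre. Assembling the triangle inequality as you do gives
\[
\limsup_{h_U(\phi(t))\to\infty}\left|\frac{\hat{h}_{X_{\phi(t)},C_{\phi(t)},L_{\phi(t)}}(P_t)}{h_U(\phi(t))} - \hat{h}_{X_{\phi(\eta)},C_{\phi(\eta)},L_{\phi(\eta)}}(P_\eta)\right| \leq c_3,
\]
where crucially $c_3$ depends only on the family. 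One then applies the \emph{same} inequality to $Q=\sigma^n(P)$ (which is again a path with initial vertex rational over some finite cover of $U$), and the left side picks up a factor $\alpha^{-n}$ while the right side is still $c_3$. Letting $n\to\infty$ forces the $\limsup$ to vanish. So the missing idea in your write-up is this $\alpha^{-n}$ normalization trick; once you have it, you can discard the (incorrect) $o(h_U)$ claim entirely and the rest of your outline, including the ``in particular'' clause and the corollary via Northcott on $U(K)$, matches the paper.
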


The proof of the theorem comes in a series of lemmas, and follows the outline of the proof of Theorem~4.1 of \cite{callsilv}. 
The following lemma is the analogue of \cite[Theorem~3.1]{callsilv}, and is proven in the same way.
\begin{lemma}
Let $X, C, L$ be a family of polarized correspondences over $U$, and let $h_U$ be a degree-one height on $U$. Then there exist constants $c_1$ and $c_2$ such that for all $t\in U(\overline{K})$ and all $P\in\path_{t}(\overline{K})$,
\[\left|\hat{h}_{X_t, C_t, L_t}(P)-h_{X_t, L_t}(\pi(P))\right|\leq c_1h_U(t)+c_2\]
\end{lemma}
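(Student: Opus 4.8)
The plan is to imitate the proof of the uniform boundedness of the difference between the Weil height and the canonical height from the proof of Theorem~\ref{th:canheight}, but to track carefully how the implied constant there depends on the base point $t\in U(\overline{K})$. First I would recall that the key input in that proof was a single inequality of the form $\left|\alpha^{-1}h_{X,L}(\pi\circ\sigma(P))-h_{X,L}(\pi(P))\right|\leq\kappa$, obtained by running $\pi\circ\sigma(P)=y\circ\epsilon(P)$ through the height machine together with the linear equivalence $y^*L\sim\alpha x^*L$. In the relative setting the divisor $y^*L-\alpha x^*L$ is only linearly equivalent to a divisor fibral over $U$, so the analogous computation on the fibre $X_t$ produces an error of the shape $\kappa(t)=c_1 h_U(t)+c_2$ rather than a constant, where $c_1$, $c_2$ are independent of $t$; this is exactly the content of \cite[Theorem~3.1]{callsilv}, and I would invoke that result (or re-derive it by spreading out the fibral divisor and applying functoriality of heights on the total space $X$, with $h_U$ pulled back along the structure map) to get the fibrewise estimate
\[\left|\alpha^{-1}h_{X_t,L_t}(\pi\circ\sigma(P))-h_{X_t,L_t}(\pi(P))\right|\leq c_1 h_U(t)+c_2\]
for all $t\in U(\overline{K})$ and all $P\in\path_t(\overline{K})$.

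Next I would run the same Tate telescoping argument as in the proof of Theorem~\ref{th:canheight}, but now on the fibre $X_t$ with the $t$-dependent constant $\kappa(t)=c_1 h_U(t)+c_2$ in place of $\kappa$. The triangle inequality gives
\[\left|\alpha^{-n}h_{X_t,L_t}(\pi\circ\sigma^n(P))-h_{X_t,L_t}(\pi(P))\right|\leq\left(\frac{\alpha}{\alpha-1}\right)\kappa(t),\]
and since $\hat{h}_{X_t,C_t,L_t}(P)=\lim_{n\to\infty}\alpha^{-n}h_{X_t,L_t}(\pi\circ\sigma^n(P))$ exists (Theorem~\ref{th:canheight} applied to the polarized correspondence $(X_t,C_t,L_t)$ over the number field $K(t)$), passing to the limit yields
\[\left|\hat{h}_{X_t,C_t,L_t}(P)-h_{X_t,L_t}(\pi(P))\right|\leq\left(\frac{\alpha}{\alpha-1}\right)(c_1 h_U(t)+c_2).\]
Absorbing the factor $\alpha/(\alpha-1)$ into new constants, which again do not depend on $t$ or $P$, gives the statement with $c_1'=\alpha c_1/(\alpha-1)$ and $c_2'=\alpha c_2/(\alpha-1)$.

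The one point that needs care — and which I expect to be the main obstacle — is making precise the claim that the $O(1)$ in the inequality $\left|\alpha^{-1}h_{X_t,L_t}(\pi\circ\sigma(P))-h_{X_t,L_t}(\pi(P))\right|=O(1)$, when taken fibrewise over a one-parameter family, can be bounded linearly in $h_U(t)$ uniformly in $P$. This is where one uses that $y^*L-\alpha x^*L$ is fibral: write $y^*L-\alpha x^*L\sim\mathrm{div}(f)$ with $f$ the pullback of a rational function on $U$ (tensored with $\RR$), so that the discrepancy between $h_{C_t,y^*L}(\epsilon(P))$ and $\alpha h_{C_t,x^*L}(\epsilon(P))$ is governed by $\log|f\circ\epsilon(P)|$-type terms which, being values of a function pulled back from $U$, contribute at most $O(h_U(t))$ independently of the path $P$. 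The rest — functoriality of Weil heights along the finite maps $x,y:C\to X$ and along $\pi,\epsilon$, and the boundedness of the height-machine error terms over the projective total space — is standard and uniform once one works on the total spaces $X$ and $C$ rather than fibre by fibre, exactly as in \cite[Theorem~3.1]{callsilv}. I would remark that the correspondence structure introduces nothing new here beyond the deterministic case, since the argument only ever sees the single edge $\epsilon(P)\in C_t$ and the two projections.
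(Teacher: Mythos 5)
Your proposal follows essentially the same route as the paper: establish a one--step height comparison on the fibre with error term $\kappa(t)=c_1h_U(t)+c_2$, then run Tate's telescoping series to propagate it to the canonical height. The paper's proof explicitly does the first step and merely gestures at the telescope; you spell both out, which is fine.

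One imprecision worth flagging: you write $y^*L-\alpha x^*L\sim\operatorname{div}(f)$ with $f$ the pullback of a rational function on $U$. That is not what the polarization hypothesis gives. The hypothesis is that $y^*L-\alpha x^*L$ is linearly equivalent to a \emph{fibral} divisor $F$ on $C$; a fibral divisor need not be the pullback of a \emph{principal} divisor on $U$ (that would force $F$ to be $u^*$ of a degree-zero principal divisor on $\tilde U$). The correct mechanism, and the one the paper uses following Call--Silverman, is the divisor sandwich: pick an effective $D$ on $\tilde U$ whose support contains the image of the support of $F$, so that $u^*D>F>-u^*D$; then $|h_{C,F}(Q)|\leq h_{C,u^*D}(Q)+O(1)=h_{\tilde U,D}(u(Q))+O(1)\ll h_{\tilde U,H}(u(Q))+O(1)$ for any ample $H$, and finally $h_{\tilde U,H}$ is commensurable with any degree-one height on $U$. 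Since you also say you would invoke Call--Silverman Theorem~3.1, which contains exactly this argument, this is a slip in articulation rather than a genuine gap, but the ``$f$ pulled back from $U$'' phrasing would not survive as written.
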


\begin{proof}
Write $\tilde{U}$ for the projective closure of $U$. 
Note that this lemma is the analogue of \cite[Theorem~3.1]{callsilv}, but the proof is in fact simpler. We need not worry about resolving the indeterminacy of rational maps, since the coordinate projections are always morphisms. Now, by the hypothesis that $y^*L -\alpha x^* L$ is fibral, as in \cite[Proof of Theorem~3.1]{callsilv} there is an effective divisor $D$ on $\tilde{U}$ such that for the map $u:C\to U$ we have
\[u^* D > y^* L-\alpha x^*L > -u^* D.\]
Choosing $H$ on $\tilde{U}$ ample such that $nH+D$ and $nH-D$ are both ample, we have
\[\left|h_{C, y^*L -\alpha x^*L}(Q)\right|\leq nh_{\tilde{U}, H}(u(Q))+O(1)\]
for all $Q$ on $Y$. Thus, for a given path $P\in\path(U)$,
\begin{eqnarray*}
\left|h_{X_t, L_t}(\pi\circ\sigma(P_t))-\alpha h_{X_t, L_t}(\pi(P_t))\right|&=&\left|h_{X, L}(y\circ\epsilon(P_t))-\alpha h_{X_t, L_t}(x\circ\epsilon(P_t))\right|\\
&=&\left|h_{Y, y^*L}(\epsilon(P_t))-\alpha h_{Y, x^*L}(\epsilon(P_t))\right|\\
&=&\left|h_{C, y^*L - \alpha x^*L}(\epsilon(P_t))\right|\\
&\leq& nh_{\tilde{U}, H}(u\circ\epsilon(P_t))+O(1)\\
&=&nh_{\tilde{U}, H}(t)+O(1).
\end{eqnarray*}
The result now follows from the fact that $h_{\tilde{U}, H}$ is bounded above by a multiple of any given degree one height on $U$.
\end{proof}

\begin{lemma}
Let $\psi:V\to U$ be a finite cover, and let $f:V\to X$ be a morphism. Then for any $t\in V$,
\[h_{X_{\psi(t)}, L_{\psi(t)}}(f(t))=h_{X_\eta, L_\eta}(f_\eta)h_U(\psi(t))+O\left(\sqrt{h_{U}(\psi(t))}\right),\]
where the implied constant is allowed to depend on $f$.
\end{lemma}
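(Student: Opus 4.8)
This lemma is exactly the content of Theorem B.10.1 in Hindry–Silverman (the "height-counting" specialization estimate for Weil heights in a family), transported to the present setting; the only thing to check is that the hypotheses apply. I would begin by reducing to the case where $L$ is a genuine Cartier divisor with integral coefficients rather than an $\RR$-divisor: write $L = \sum r_i L_i$ with $r_i\in\RR$ and $L_i$ honest divisors, observe that Weil heights are linear in the divisor class, and note that the claimed error term $O(\sqrt{h_U})$ is stable under taking finite $\RR$-linear combinations. So it suffices to prove the statement for each $L_i$ separately, i.e. for an integral divisor class.

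Next I would set up the geometry. After replacing $U$ by its projective closure $\tilde U$ and spreading out, we may assume $X\to \tilde U$ is a projective morphism and $L$ is (represented by) a relatively very ample divisor, so that $X$ embeds in some $\PP^N_{\tilde U}$ with $L$ the restriction of $\Ocal(1)$. The morphism $f:V\to X\subseteq\PP^N_{\tilde U}$ then corresponds, after composing with $\psi$, to a morphism $V\to \PP^N$ whose components are rational functions on $V$; the generic fibre point $f_\eta\in X_\eta(\overline{k(U)})\subseteq\PP^N(\overline{k(U)})$ is just the image of the generic point. At this stage the statement becomes: for the height of a point in a fixed projective space obtained by specializing a fixed algebraic family over a curve, $h(f(t)) = h(f_\eta)\, h_U(\psi(t)) + O(\sqrt{h_U(\psi(t))})$. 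This is precisely the content of the classical specialization theorem for heights on $\PP^N$ over a curve — one counts, degree by degree, how the coordinates of $f(t)$ behave: the leading term is governed by the degree of $f_\eta$ over $k(U)$, which is $h_{X_\eta,L_\eta}(f_\eta)$, and the secondary fluctuation is controlled by the ramification/genus contribution, giving the square-root error. I would cite \cite[Theorem~B.10.1]{hind-silv} (or prove it directly by the same Riemann–Roch/Riemann–Hurwitz estimate on the curve $V$) to close the argument.

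The one genuinely substantive point, and the place I expect to spend effort, is making sure the normalization of $h_{X_\eta,L_\eta}$ used in Section~\ref{sec:var} — "corresponding on $k(U)$ to the degree of a function on $U$" — matches the coefficient that naturally appears as the leading term in the specialization estimate. Concretely: $h_{X_\eta,L_\eta}(f_\eta)$ should come out equal to the intersection number $(f_* V \cdot L)$ computed on the generic fibre, equivalently the degree of the line bundle $f^*L$ on $V$ divided by $\deg(\psi)$, and I need to confirm that the height-counting theorem produces exactly this and not, say, a multiple of it. This is a bookkeeping check with the projection formula and the definition of $h_U$ as a degree-one height, but it is the step where an off-by-a-constant error would creep in if one is careless. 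Everything else — the passage from $\RR$-divisors to integral ones, spreading out, the dependence of the implied constant on $f$ — is routine. Note also that, since the coordinate projections $x,y:C\to X$ are morphisms (not merely rational maps), no resolution of indeterminacy is needed anywhere, which is why, as in the previous lemma, the argument is cleaner than its counterpart in \cite{callsilv}.
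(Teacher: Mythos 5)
Your proposal is correct and follows essentially the same route as the paper: both reduce, via the functoriality and linearity of Weil heights, to comparing two height functions of equal normalized degree on the curve $V$, and then invoke the classical $O(\sqrt{h})$ estimate for such a comparison (the paper cites \cite[Prop.~5.4, p.~115]{lang}; your ``Riemann--Roch/Riemann--Hurwitz estimate on the curve $V$'' and the Hindry--Silverman reference are pointing at the same fact). The only cosmetic difference is that the paper stays coordinate-free, pulling $L$ back along $f$ directly rather than passing through a projective embedding of $X$, which sidesteps the reduction to integral very ample divisors; and you are right that the one point requiring care is confirming the normalization $h_{X_\eta,L_\eta}(f_\eta)=\deg(f^*L)/\deg(\psi)$, which the paper handles by rescaling $f^*L$ to match the degree of $\psi^*D$ before applying the square-root bound.
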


\begin{proof}
Essentially by definition,
\[h_{X_\eta, L_\eta}(f_\eta)=\frac{\deg(f^*L)}{[k(V):k(U)]}=\frac{\deg(f^*L)}{\deg (\psi)}.\]
Let $D$ be the divisor relative to which $h$ is defined, and let
\[E=\frac{1}{h_{X_\eta, L_\eta}(f_\eta)}f^*L,\]
so that $\deg(E)=\deg(D)=1$.
 Note that, by \cite[Proposition 5.4, p.~115]{lang}, we have
\[h_{U, E}(t)=h_{U, D}(t)+O\left(\sqrt{h_{U, D}(t)}\right).\]
 Also, the embedding $X_t\to X$ pulls $L$ back to $L_t$, and so for any $t\in V$ we have, 
\begin{eqnarray*}
h_{X_t, L_t}(f(t))&=&h_{X, L}(f(t))\\&=&h_{V, f^*L}(t)+O(1)\\
&=&h_{X_\eta, L_\eta}(f_\eta)h_{U, E}(t)+O(1)\\
&=&h_{X_\eta, L_\eta}(f_\eta)h_{U, D}(\psi(t))+O\left(\sqrt{h_{U, D}(\psi(t))}\right).
\end{eqnarray*}
\end{proof}

\begin{proof}[Proof of Theorem~\ref{th:specialization}]
Let $P\in\path(\mathscr{U})$ such that $\pi(P)\in X(V)$, for some finite $\psi:V\to U$  that $\phi:\mathscr{U}\to U$ factors as $\phi=\psi\circ\xi$, for some $\xi:\mathscr{U}\to V$. We may specialize $P$ at $t\in \mathscr{U}(\overline{K})$ to obtain a path $P_t\in\path_{\phi(t)}(\overline{K})$. Note that, in this setting, $\pi(P_t)=\pi(P)_{\xi(t)}\in X(V)$. Then interpreting $\pi(P)$ as a morphism from $V$ to $X$, we have the following.
\begin{multline*}
\left|\hat{h}_{X_{\phi(t)}, C_{\phi(t)}, L_{\phi(t)}}(P_t)-\hat{h}_{X_{\phi(\eta)}, C_{\phi(\eta)}, L_{\phi(\eta)}}(P_\eta)h_U(\phi(t))\right|\\ \leq \left|\hat{h}_{X_{\phi(t)}, C_{\phi(t)}, L_{\phi(t)}}(P_t)-h_{X_{\phi(t)}, L_{\phi(t)}}(\pi(P_t))\right| \\
+ \left|h_{X_{\phi(t)}, L_{\phi(t)}}(\pi(P)_{\xi(t)})-h_{X_{\phi(\eta)}, L_{\phi(\eta)}}(\pi(P)_\eta)h_U(\psi\circ\xi(t))\right|\\+\left|h_{X_{\phi(\eta)}, L_{\phi(\eta)}}(\pi(P)_\eta)-\hat{h}_{X_{\phi(\eta)}, C_{\phi(\eta)}, L_{\phi(\eta)}}(P_\eta)\right|h_{U}(\phi(t))\\
\leq  c_3h_U(\phi(t))+c_4\sqrt{h_U(\phi(t))}+c_5,
\end{multline*}
where $c_4$ and $c_5$ depend on $V$ and $P$, but $c_3$ depends only on $X$, $C$, $L$, and $U$.
We thus have
\begin{equation}\label{eq:seethree}\limsup_{h_U(\phi(t))\to \infty}\left|\frac{\hat{h}_{X_{\phi(t)}, C_{\phi(t)}, L_{\phi(t)}}(P_t)}{h_U(\phi(t))}-\hat{h}_{X_{\phi(\eta)}, C_{\phi(\eta)}, L_{\phi(\eta)}}(P_\eta)\right|\leq c_3,\end{equation}
noting that this bound does not depend on $P$ or the cover $V\to U$.

Now, given our path $P\in \path(\mathscr{U})$ satisfying $\pi(P)\in X(U)$, note that $Q=\sigma^n(P)$ is a path $Q\in \path(\mathscr{U})$ satisfying $\pi(Q)\in X(V)$ for some finite $\psi:V\to U$ as above.
Thus we may apply the estimate~\eqref{eq:seethree} to the path $Q=\sigma^n(P)$, to obtain
\begin{multline*}\limsup_{h(\phi(t))\to\infty}\left|\frac{\hat{h}_{X_{\phi(t)}, C_{\phi(t)}, L_{\phi(t)}}(P_t)}{h_U(\phi(t))}-\hat{h}_{X_{\phi(\eta)}, C_{\phi(\eta)}, L_{\phi(\eta)}}(P_\eta)\right|\\=\limsup_{h_U(\phi(t))\to\infty}\alpha^{-n}\left|\frac{\hat{h}_{X_{\phi(t)}, C_{\phi(t)}, L_{\phi(t)}}(Q_t)}{h_U(\phi(t))}-\hat{h}_{X_{\phi(\eta)}, C_{\phi(\eta)}, L_{\phi(\eta)}}(Q_\eta)\right|\leq \alpha^{-n}c_3.\end{multline*}
Since this holds for all $n$, and since the absolute value is non-negative, we have in fact
\[\lim_{h_U(\phi(t))\to\infty}\left|\frac{\hat{h}_{X_{\phi(t)}, C_{\phi(t)}, L_{\phi(t)}}(P_t)}{h_U(\phi(t))}-\hat{h}_{X_{\phi(\eta)}, C_{\phi(\eta)}, L_{\phi(\eta)}}(P_\eta)\right|=0.\]
This proves the main claim.

If the set \[\{\phi(t):P_t\text{ is finitely supported}\}\subseteq U(k)\] is not of bounded height, then we may choose a sequence $t_n\in \mathscr{U}(k)$  such that $P_{t_n}$ is finitely supported for all $n$, and $h_U(\phi(t_n))\to \infty$ with $n$. But then we have
\[0=\lim_{n\to\infty}\left|\frac{\hat{h}_{X_{\phi(t)}, C_{\phi(t)}, L_{\phi(t)}}(P_t)}{h_U(\phi(t))}-\hat{h}_{X_{\phi(\eta)}, C_{\phi(\eta)}, L_{\phi(\eta)}}(P_\eta)\right|=\hat{h}_{X_{\phi(\eta)}, C_{\phi(\eta)}, L_{\phi(\eta)}}(P_\eta)\]
since the fractional term always has vanishing numerator.
\end{proof}

As a corollary to the theorem, we note the following.
\begin{corollary}
In the setting of Theorem~\ref{th:specialization}, consider a dominant morphism $\phi:\mathscr{U}\to U$ and a path $P\in \path(\mathscr{U})$ with $\hat{h}_{X_{\phi(\eta)}, C_{\phi(\eta)}, L_{\phi(\eta)}}(P_\eta)>0$. Then the set
\[\{t\in U(K):P_s\text{ is finitely supported for some }\phi(s)=t\}\]
is a finite set.
\end{corollary}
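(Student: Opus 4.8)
The plan is to obtain this as an immediate consequence of the bounded-height statement at the end of Theorem~\ref{th:specialization}, combined with Northcott's finiteness theorem.

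First I would unwind the definition of the set. If $t$ lies in it, then by hypothesis there is some $s\in\mathscr{U}(k)$ with $\phi(s)=t$ such that $P_s$ is finitely supported. Hence $t$ belongs to the image
\[\{\phi(s):P_s\text{ is finitely supported}\}\subseteq U(k).\]
This image may itself be infinite — indeed, as Example~\ref{ex:binary} illustrates, even a single fibre of $\pi$ can contain uncountably many finitely supported paths — so the restriction to $U(K)$ is essential, and the finiteness claim is genuinely about $K$-rational specialization points rather than about the paths themselves.

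Second, since we are assuming $\hat{h}_{X_{\phi(\eta)}, C_{\phi(\eta)}, L_{\phi(\eta)}}(P_\eta)>0$, Theorem~\ref{th:specialization} asserts exactly that the above image is a set of bounded height with respect to a Weil height $h_U$ on (a projective model of) $U$ relative to a divisor class of degree one; on a curve such a class has positive degree, hence is ample. Therefore the set in the statement of the corollary consists of $K$-rational points of $U$ of bounded height relative to an ample class.

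Finally I would invoke Northcott's theorem: the set of points of a projective variety over a number field having bounded height (with respect to an ample divisor) and bounded degree is finite. Applying this with the degree bound $d=1$ (so that we are looking at $K$-rational points of the projective closure of $U$) shows the set is finite. I do not anticipate any real obstacle; the only points requiring a word of care are that $h_U$ is a genuine Weil height to which Northcott applies, which is built into the hypotheses of Theorem~\ref{th:specialization}, and the by-now-routine passage from $U$ to a projective curve birational to it, exactly as in Section~\ref{sec:var}.
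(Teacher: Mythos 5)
Your proof is correct and follows what is evidently the paper's intended (but unwritten) argument: the set in question coincides with $\{\phi(s):P_s\text{ finitely supported}\}\cap U(K)$, which by the final assertion of Theorem~\ref{th:specialization} has bounded height when $\hat{h}_{X_{\phi(\eta)}, C_{\phi(\eta)}, L_{\phi(\eta)}}(P_\eta)>0$, and then Northcott's theorem (applied to $K$-rational points of a projective model of $U$ with respect to the degree-one, hence ample, class) gives finiteness. Your observations that the restriction to $U(K)$ is the crux and that the passage to a projective curve birational to $U$ is routine are both exactly the points one needs to flag.
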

It would be of significant interest to determine the conditions under which $\hat{h}_{X_{\phi(\eta)}, C_{\phi(\eta)}, L_{\phi(\eta)}}(P_\eta)=0$, but experience indicates that this is likely more challenging than the yet-unresolved number field case \cite{baker, benedetto}.


\section{Measured sets of absolute values and augmented divisors}

In this section we lay out some of the basics needed for our discussion of local canonical heights. The fundamental difference between this treatment and that in \cite{hind-silv, lang} is that we work in a fundamental way over the algebraic closure $k=\overline{K}$ of a number field, which bars us from expressing global heights as weighted sums of local heights in the usual way. The remedy is to employ Gubler's theory of $M$-fields \cite{gubler} although, since the author developed this material independently before learning of Gubler's work, the presentation here differs slightly from \cite{gubler}.

Let $k$ be a field. A \emph{measured set of absolute values} on $k$ will be a set $M_k$ of absolute values, along with a $\sigma$-algebra $\mathcal{B}$ of subsets, and a measure $\mu$, such that for every non-zero $\alpha\in k$, the function $v\mapsto \log|\alpha|_v$ is measurable. We will say that such an object \emph{satisfies the product formula} if and only if we have
\[\int_{M_k}\log|\alpha|_vd\mu(v)=0\]
for any $\alpha\in k^*$.

Here is a simple example to aid the reader's intuition.
\begin{example}
Let $k=\QQ$, let $M_\QQ$ be the usual set of absolute values, let $\mathcal{B}$ be the set of all subsets of $M_\QQ$, and let $\mu$ be the counting measure. Then  $v\mapsto \log|\alpha|_v$ is measurable for any $\alpha\in\QQ^*$, since the function vanishes off of a finite set, and
\[\int_{M_k}\log|\alpha|_vd\mu(v)=\sum_{v\in M_\QQ}\log|\alpha|_v=0\]
by the standard product formula.

More generally, if $k$ is a number field, $M_k$ is the usual set of absolute values, $\mathcal{B}$ is the power set of $M_k$, and $\mu$ is a weighted counting measure assigning weight $[k_v:\QQ_v]/[k:\QQ]$ to each $v\in M_k$, then this is a measured set of absolute values on $k$ satisfying the product formula.
\end{example}

We begin with a theorem ensuring that, for any number field $K$, there is a natural way to view the set $M_k$ of places of $k=\overline{K}$ as a measured set of absolute values.
In order to make the result somewhat more generally applicable, however, we relax the condition that $K$ is a number field. We say that the set of absolute values $M_K$ on the field $K$ is \emph{coherent} if and only if, for every finite extension $E/L/K$ and every $v\in M_L$, we have
\begin{equation}\label{eq:coherent}\sum_{w\mid v}[E_w:L_w]=[E:L],\end{equation}
where the sum is over extensions $w$ of $v$ to $E$. For instance, the usual set of absolute values on a number field is coherent, as is the usual set of absolute values on a function field in characteristic zero.
\begin{theorem}\label{th:measuredplaces}
Let $K$ be a field, let $M_K$ be a coherent set of absolute values on $K$, and let  $M_{k}$ be the set of absolute values on $k=\overline{K}$ which restrict on $K$ to elements of $M_K$. Then we may equip $M_{k}$ with the structure of a measured set of absolute values in such a way that for any finite $L/K$ and $w\in M_{k}$, the set
\[A_{w, L}=\{v\in M_{k}:v=w\text{ on }L\}\]
is measurable, and
\[\mu(A_{w, L})=\frac{[L_w:K_w]}{[L:K]}.\]
\end{theorem}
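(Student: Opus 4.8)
The plan is to construct the $\sigma$-algebra and measure on $M_k$ as an inverse limit of the finite measured sets $M_L$, exploiting coherence to see that the transition maps (restriction of places) are compatible with the weighted counting measures. First I would observe that for each finite $L/K$, the set $M_L$ carries a finite measure $\mu_L$ assigning mass $[L_w:K_w]/[L:K]$ to each place $w$ (this is finite since, by coherence applied to $L/K$, the total mass is $\sum_{w\mid v}[L_w:K_w]/[L:K]$ summed appropriately over the places of $K$ below a fixed one — one normalizes so the total is $1$, or works with the natural product-formula normalization). For $L\subseteq L'$ the restriction map $r_{L'/L}:M_{L'}\to M_L$ is surjective with finite fibres, and coherence for $L'/L$ says exactly that $\sum_{w'\mid w}[L'_{w'}:L_{w'}]=[L':L]$, which after dividing by $[L':K]=[L':L][L:K]$ gives $(r_{L'/L})_*\mu_{L'}=\mu_L$. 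Thus $(M_L,\mu_L)$ with these restriction maps form a projective system of probability (or finite-measure) spaces.

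Next I would identify $M_k$ with the projective limit $\varprojlim_L M_L$ as a set: a place of $k=\overline{K}$ is determined by its compatible system of restrictions to all finite subextensions, and conversely any such compatible system extends to a place of $k$ (this is a standard fact — e.g. via Zorn's lemma on the valuation rings, or noting $k=\bigcup_L L$). Give each $M_L$ the discrete $\sigma$-algebra (it is a finite set when $M_K$ below a fixed place is finite; in general one takes the $\sigma$-algebra generated by the fibres of restriction to $K$, making each $\mu_L$ genuinely $\sigma$-finite), and let $\mathcal{B}$ be the $\sigma$-algebra on $M_k$ generated by the preimages of measurable sets in the $M_L$, i.e. the cylinder sets. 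Then the Kolmogorov extension theorem (or, in the $\sigma$-finite case, the Bochner/Prokhorov extension theorem for projective limits of Radon measures, as in Gubler's treatment) produces a unique measure $\mu$ on $\mathcal{B}$ restricting to $\mu_L$ on each cylinder algebra. The set $A_{w,L}$ is precisely the preimage under $M_k\to M_L$ of the singleton $\{w\}$, hence measurable, with $\mu(A_{w,L})=\mu_L(\{w\})=[L_w:K_w]/[L:K]$, which is the desired formula.

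Finally I would check measurability of $v\mapsto\log|\alpha|_v$ for $\alpha\in k^*$: choosing any finite $L/K$ with $\alpha\in L$, this function factors through $M_k\to M_L$ and is therefore $\mathcal{B}$-measurable as a pullback of a function on the finite (or discretely-measurable) set $M_L$; the product formula integral $\int_{M_k}\log|\alpha|_v\,d\mu$ then reduces to the finite weighted sum $\sum_{w\in M_L}\tfrac{[L_w:K_w]}{[L:K]}\log|\alpha|_w$, which vanishes by the product formula on $L$ (itself a consequence of the product formula on $K$ together with coherence). The main obstacle is the measure-theoretic extension step: one must be careful that when $M_K$ is infinite (e.g. a number field) the spaces $M_L$ are only $\sigma$-finite rather than finite, so the naive Kolmogorov extension theorem for probability measures does not literally apply; I would handle this by fibring the whole construction over $M_K$ itself — for each fixed $v\in M_K$ the places of $k$ above $v$ form a genuine probability-space projective limit — and then assembling these fibrewise measures into a single $\sigma$-finite measure on $M_k$ via the $\sigma$-finiteness of the counting measure on $M_K$. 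Verifying that this assembly is well-defined and that the resulting $\sigma$-algebra contains all the sets $A_{w,L}$ is the part requiring the most care, and is where the reference to \cite{gubler} does the heavy lifting.
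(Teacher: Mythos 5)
Your proposal is correct and follows essentially the same route as the paper: fibre over the places of $K$, realize each fibre $M_{k,v}$ as the projective limit of the finite sets $M_{L,v}$, use coherence to obtain a compatible system of measures on these, and then extend to the cylinder $\sigma$-algebra. The only cosmetic difference is that you invoke the Kolmogorov/Bochner projective-limit extension theorem by name, whereas the paper runs a self-contained Carath\'{e}odory extension argument, using the compactness of $M_{k,v}$ as a profinite space (and the fact that the generating sets $A_{w,L}$ are clopen) to verify $\sigma$-additivity on the cylinder algebra directly.
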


\begin{proof}
Fix an absolute value $v\in M_K$, and let $M_{k, v}$ be the set of absolute values on $k$ restricting on $K$ to $v$. We write $\mathscr{F}_{0, v}$ for the algebra of sets generated by sets of the form $A_{w, L}$, for $L/K$ finite and $w\in M_L$, and define $\mu_v$ on $\mathscr{F}_{0, v}$ as in the statement of the theorem, extended to disjoint unions by additivity. Note that the fact that $\mu_v$ is well-defined is precisely the coherence condition imposed on the places of $K$. In particular, if $E/L/K$ is a tower of finite extensions, then
\[\mu_v(A_{w, L})=\frac{[L_w:K_w]}{[L:K]}=\sum_{\substack{u\in M_E\\ u=w\text{ on }L}}\frac{[E_u:K_u]}{[E:K]}=\sum_{\substack{u\in M_E\\ u=w\text{ on }L}}\mu_v(A_{u, L}).\]
We wish to show that $\mu_v$ extends to a measure on some $\sigma$-algebra, and so we recall a special case of a theorem of Carath\'{e}odory.
\begin{theorem}[Carath\'{e}odory Extension Theorem {\cite[p.~97]{thomson}}]\label{th:cara}
Let $X$ be a set, let $\mathcal{B}$ be an algebra of subsets of $X$, and let $\mu:\mathcal{B}\to \RR$ be a function such that $\mu(\emptyset)=0$,  and
\begin{equation}\label{eq:sigmaadd}\mu\left(\bigcup_{i\geq 0}B_i\right)=\sum_{i\geq 0}\mu(B_i)\end{equation}
for any sequence $B_i\in\mathcal{B}$ of disjoint sets whose union happens to be in $\mathcal{B}$. Then $\mu$ extends to a measure on a $\sigma$-algebra containing $\mathcal{B}$. Furthermore, if $\mu$ is $\sigma$-finite on $\mathcal{B}$, then this extension is unique.
\end{theorem}

\begin{remark}
Carath\'{e}odory's theorem is proven by extending $\mu$ to an outer measure $\mu^*$ on $X$. Taking $\mathscr{F}$ to be the collection of subsets $A\subseteq X$ satisfying
\[\mu^*(E)=\mu^*(E\cap A)+\mu^*(E\setminus A)\]
for every $E\subseteq X$,
 it is shown that $\mathcal{B}\subseteq \mathscr{F}$, and that $\mathscr{F}$ is the maximal $\sigma$-algebra  to which $\mu^*$ restricts as a measure. If we take $\mathscr{F}$ to be this maximal $\sigma$-algebra (rather than some smaller $\sigma$-algebra containing $\mathcal{B}$), then the construction is unique.
\end{remark}

\begin{lemma}
The function $\mu_v$ extends uniquely to a measure on some $\sigma$-algebra $\mathscr{F}_v$ containing $\mathscr{F}_{0, v}$.
\end{lemma}

\begin{proof}
We need only show that $\mathscr{F}_{0, v}$ and $\mu$ meet the conditions of Theorem~\ref{th:cara}, which really comes down to establishing \eqref{eq:sigmaadd}. The facts that $\mu_v(\emptyset)=0$ and $\mu_v(M_{k, v})=1$ are immediate, while the (finite) additivity of $\mu_v$ follows immediately from \eqref{eq:coherent}. The $\sigma$-additivity is the main content of the lemma.

Note that, if we take the sets $A_{w, L}$ as the basis for a topology on $M_{k, v}$, then this space is naturally the inverse limit of the spaces $M_{L, v}$ with the discrete topology, for $L/K$ finite. Since these spaces are compact and Hausdorff, so is $M_{k, v}$. Now, suppose we have $B_i\in \mathscr{F}_{0, v}$. Note that the elements of $\mathscr{F}_{0, v}$ are both open and closed, and so if $\bigcup_{i\geq 0} B_i\in\mathscr{F}_{0, v}$, there must be a finite subcollection of the $B_i$ with the same union. But these sets are disjoint, and hence all but finitely many are empty. Now $\sigma$-additivity follows from finite additivity.
\end{proof}

We now show quickly that one may construct a disjoint union of a collection of measure spaces.
\begin{lemma}\label{lem:disjointunion}
Let $X_i$ be disjoint sets, for $i\in I$, let $\mathscr{F}_i$ be a $\sigma$-algebra on $X_i$, and let $\mu_i$ be a measure on $\mathscr{F}_i$. Then there is a $\sigma$-algebra $\mathscr{F}$ on $X=\bigcup X_i$ and a measure $\mu$ on $\mathscr{F}$ such that, for each $i$, $\mathscr{F}_i\subseteq \mathscr{F}$ and $\mu_i$ is the restriction to $\mathscr{F}_i$ of $\mu$.
\end{lemma}

\begin{proof}
Let $\mathscr{F}$ be the collection of subsets $B\subseteq X$ with the property that $B\cap X_i\in \mathscr{F}_i$ for all $i$. It is easy to check that $\mathscr{F}$ is a $\sigma$-algebra, and of course $\mathscr{F}_i\subseteq \mathscr{F}$ for all $i$. We can define $\mu$ on $\mathscr{F}$ by
\[\mu(B)=\sum_{i\in I}\mu_i(B\cap X_i).\]
Checking that this is a measure involves interchanging two sums, both of non-negative real numbers.
\end{proof}

Applying the construction given by Lemma~\ref{lem:disjointunion} to produce from the probability spaces $(M_{k, v}, \mathscr{F}_v, \mu_v)$ a measure space $(M_{k}, \mathscr{F}, \mu)$, we have proven the theorem. Note that, if $M_K$ is countable (as is the case when $K$ is a number field, or the function field of a curve over a countable field), then this measure space is a countable disjoint union of probability spaces (the spaces $(M_{k, v}, \mathscr{F}_v, \mu_v)$ each have total mass 1), and hence is $\sigma$-finite.
\end{proof}

Given the notion of a measured set of absolute values, and the existence of a natural such structure on $\overline{K}$ for any number field $K$, we formulate an appropriate notion of local heights on certain sorts of schemes. Our notion is motivated by the construction via N\'{e}ron divisors \cite[Chapter 10]{lang}, but we introduce a class of objects which do not quite restrict to the notion of a N\'{e}ron divisor on a variety. It would be possible to complete much of this construction with a more direct generalization of N\'{e}ron divisors, but the measure-theoretic context makes another construction more natural.

Given a measured set $M_k$ of absolute values on $k$, the appropriate analogue of an $M_K$-constant in the sense of \cite{lang} is an element of $L^1(M_k)$. Note that such functions are not precisely $M_K$-constants, even on $M_\QQ$ with the counting measure. In particular, if we enumerate the places of $\QQ$ as $v_1$ the archimedean place, $v_2$ the $2$-adic place, $v_3$ the 3-adic place, \emph{et cetera}, then $\gamma(v_n)=2^{-n}\in L^1(M_\QQ)$. But $\gamma$ is not finitely supported, and hence is not an $M_\QQ$-constant in the sense of \cite{lang}.
On the other hand, if $K$ is a number field and $\gamma$ is an $M_K$-constant in the sense of \cite{lang}, then $\gamma\in L^1(M_K)$ simply because $\gamma$ is finitely supported.

As usual, a \emph{Cartier divisor} on a $k$-scheme $X$ is represented by a cover $U_i$, for $i\in I$, by affine open sets, along with a collection of rational functions $f_i\in k(U_i)$, subject to the condition that $f_i/f_j$ and $f_j/f_i$ are both regular on $U_i\cap U_j$, for any choice of indices. As is conventional, we take a Cartier divisor to be a maximal such data, with the Cartier divisors represented by $(U_i, f_i)$ and $(U_i, g_i)$ identified if and only if $f_i/g_i$ and $g_i/f_i$ are regular on each $U_i$. The zero divisor will then be defined by $f_U=1$ for all $U\subseteq X$. The \emph{support} $\operatorname{Supp}(D)$ is defined by the condition that $P\in U\subseteq X$ is in the support of $D$ if and only if $f_U$ or $1/f_U$ is not regular at $P$.

We now define an \emph{augmented divisor} to be a Cartier divisor $(U_i, f_i)$ with an additional piece of information $\alpha_i:U_i(k)\times M_k\to \RR$ for each $i\in I$ such that
\begin{enumerate}
\item $\alpha_i(\cdot, v):U_i(k)\to \RR$ is $v$-adically continuous for each $v\in M_k$,
\item $\alpha_i(P, \cdot)$ is measurable for each $P\in U_i(k)$,
\item $|\alpha_i(P, v)|\leq \gamma(v)$ for some $\gamma\in L^1(M_k)$ and all $P\in U_i(k), v\in M_k$,
\item\label{it:welldef} $\log|f_i(P)/f_j(P)|_v=\alpha_i(P, v)-\alpha_j(P, v)$ on $U_i\cap U_j$.
\end{enumerate}
Note that, if $k$ is a number field equipped with the above weighted counting measure, then every N\'{e}ron divisor in the sense of \cite[Chapter~10]{lang} is an augmented divisor. We shall abuse notation and use the same symbol for an augmented divisor as for its underlying Cartier divisor.

To each augmented divisor $D=(U_i, f_i, \alpha_i)$, we associate a \emph{local height} (i.e., a Weil function)
\[\lambda_{X, D}:(X(k)\setminus\operatorname{Supp}(D))\times M_k\to \RR\]
by
\[\lambda_{X, D}(P, v)=-\log|f_i(P)|_v+\alpha_i(P, v)\]
for any $P\in U_i$. That this is well-defined follows from item~\eqref{it:welldef} in the definition.

Given a scheme $X$ over an algebraically closed field $k$ with a measured set of absolute values, we will write $\cadiv(X)$ for the set of Cartier divisors, and $\augdiv(X)$ for the set of augmented divisors. It is clear that both are abelian groups in a natural way, and that there is a homomorphism
\begin{equation}\label{eq:aug}\augdiv(X)\to \cadiv(X)\end{equation}
obtained by forgetting the functions $\alpha_i$. As remarked, in the case where $X$ is a projective variety we have $\nediv(X)\subseteq \augdiv(X)$ in a natural way, where $\nediv(X)$ denotes the group of N\'{e}ron divisors, and the fact that $\nediv(X)\to\cadiv(X)$ is surjective  is more-or-less the content of the local height machine. It is then of great interest to determine when the corresponding map \eqref{eq:aug} is surjective.

In order to give an answer to this question in the relevant case, we will say that a $k$-scheme $\mathscr{X}$ is a \emph{provariety} if and only if it can be exhibited as the limit of an inverse system of projective varieties over $k$ with finite transition maps.
\begin{lemma}\label{lem:cartierarepullback}
Let $\mathscr{X}$ be a provariety over $k$. Then every Cartier divisor on $\mathscr{X}$ is the pull-back of a Cartier divisor through some morphism $f:\mathscr{X}\to X$ to a projective variety. As a consequence, the homomorphism $\augdiv(\mathscr{X})\to \cadiv(\mathscr{X})$ is surjective.
\end{lemma}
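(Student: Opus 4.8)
The plan is to prove the two assertions in sequence, with the first (that every Cartier divisor on a provariety descends to a finite level) doing all the geometric work, and the second following by combining it with the surjectivity of the N\'{e}ron-divisor map on projective varieties.

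First I would set up notation: write $\mathscr{X}=\varprojlim X_n$ with $X_n$ projective varieties over $k$ and finite transition maps $p_{n}: X_{n+1}\to X_n$, and let $q_n:\mathscr{X}\to X_n$ be the canonical projections. Given a Cartier divisor $D$ on $\mathscr{X}$, represented by finitely many pairs $(U_i,f_i)$ --- here one should argue that finitely many suffice, since a Cartier divisor on any scheme can be represented on a quasi-compact open, and $\mathscr{X}$ is quasi-compact as an inverse limit of quasi-compact spaces along affine (indeed finite) transition maps --- each $f_i$ is a global section of the sheaf of total quotient rings, hence a nonzero rational function on the integral scheme $\mathscr{X}$, i.e. an element of the function field $k(\mathscr{X})=\varinjlim k(X_n)$. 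So each $f_i$ is pulled back from some $k(X_{n_i})$; taking $n=\max_i n_i$, all the $f_i$ live in $k(X_n)$. I would then use the fact (from \cite{stacks-project}, which the paper already cites for inverse limits) that, since $\mathscr{X}=\varprojlim X_m$ with affine transition maps, any open cover of $\mathscr{X}$ by finitely many opens is refined by the pullback of an open cover of some $X_m$; enlarging $n$ further, the $U_i$ may be taken to be $q_n^{-1}(V_i)$ for opens $V_i\subseteq X_n$. The data $(V_i,f_i)$ then satisfies the compatibility condition for a Cartier divisor on $X_n$ --- this one checks by noting $f_i/f_j$ and its inverse, being regular on $q_n^{-1}(V_i\cap V_j)$ and coming from $k(X_n)$, are regular on $V_i\cap V_j$ itself (regularity descends along the surjective finite, hence faithfully flat after normalization... more simply: a rational function on an integral scheme that is regular on the preimage of an open $W$ under a dominant map is regular on $W$). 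This produces a Cartier divisor $D'$ on $X=X_n$ with $q_n^*D'=D$.

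For the second assertion, given $D\in\cadiv(\mathscr{X})$, write $D=f^*D'$ with $f=q_n:\mathscr{X}\to X$ as above. By the local height machine --- i.e. surjectivity of $\nediv(X)\to\cadiv(X)$ for the projective variety $X$, which the paper invokes as ``more-or-less the content of the local height machine'' --- there is a N\'{e}ron divisor $\widetilde{D'}$ on $X$ lying over $D'$, given by data $(V_i,g_i,\beta_i)$ with $\beta_i:V_i(k)\times M_k\to\RR$ satisfying the N\'{e}ron conditions. I would then pull this back: set $U_i=f^{-1}(V_i)$, $\tilde f_i=g_i\circ f$, and $\alpha_i(P,v)=\beta_i(f(P),v)$, and verify that $(U_i,\tilde f_i,\alpha_i)$ is an augmented divisor on $\mathscr{X}$ lying over $D$. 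The four defining properties are inherited: $v$-adic continuity of $\alpha_i(\cdot,v)$ holds because $f:\mathscr{X}(k)\to X(k)$ is $v$-adically continuous (it is a morphism of $k$-schemes) and $\beta_i(\cdot,v)$ is continuous; measurability in $v$ and the uniform bound $|\alpha_i(P,v)|\le\gamma(v)$ pass through verbatim since $\alpha_i(P,\cdot)=\beta_i(f(P),\cdot)$; and the cocycle identity $\log|\tilde f_i/\tilde f_j|_v=\alpha_i-\alpha_j$ is just the corresponding identity for $\widetilde{D'}$ evaluated at $f(P)$.

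The main obstacle I expect is the descent-of-the-covering step: while it is intuitively clear that a Cartier divisor on an inverse limit is defined at a finite stage, making this rigorous requires being careful that (i) only finitely many charts are needed (quasi-compactness of $\mathscr{X}$), (ii) the open sets, and not merely the rational functions $f_i$, descend, and (iii) the compatibility relations among the $f_i$ --- which are equalities of regular functions on intersections --- descend to the same finite level. Steps (ii) and (iii) are exactly the kind of ``limit of schemes'' statements covered in \cite[Chapter~31]{stacks-project} (limits of schemes with affine transition morphisms), and I would cite those rather than reprove them; the only mild subtlety is translating between ``Cartier divisor'' and ``the covering-plus-functions data'', which is routine. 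Everything after this descent step is formal.
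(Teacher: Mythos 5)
Your argument is essentially the same as the paper's: descend the finitely many charts and rational functions defining the Cartier divisor to some finite level $X_n$ using the limit-of-schemes machinery of \cite[Chapter~31]{stacks-project} (the paper cites Lemma~3.8 for the opens, Lemma~2.2 for the functions), then pull back a N\'{e}ron divisor structure from there via \cite[Theorem~3.5, p.~261]{lang}.

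One caution about your aside on the compatibility-descent step: the ``more simply'' claim --- that a rational function on an integral scheme which is regular on the preimage of an open $W$ under a dominant morphism is regular on $W$ --- is false in general, even for finite surjective morphisms between projective varieties. Take the normalization $\AA^1 \to X$ of a cuspidal cubic $X$: the function $t$ is regular upstairs but $t \notin \mathcal{O}_{X,0} = k[t^2,t^3]_{(t^2,t^3)}$. (Your ``faithfully flat after normalization'' remark has the same problem, since normalizations need not be flat.) The correct fix is already hidden in your earlier appeal to the Stacks Project: having fixed the candidate pair $(V_i, h_i)$ at level $n$, the regular function $f_i/f_j$ on $q_n^{-1}(V_i\cap V_j)$ lies in $\varinjlim_{m\geq n}\mathcal{O}_{X_m}(p_{mn}^{-1}(V_i\cap V_j))$, hence is the pullback of a regular function on $p_{mn}^{-1}(V_i\cap V_j)$ for some finite $m\geq n$, which by injectivity of $q_m^*$ on function fields must equal $p_{mn}^*(h_i/h_j)$. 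So one enlarges $n$ to $m$ and works with the divisor on $X_m$ rather than $X_n$. The paper glosses over this point as well, but the enlargement is the intended reading of its reliance on \cite[Chapter~31, Lemma~2.2]{stacks-project}.
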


\begin{proof}
Suppose that we are given a Cartier divisor $\mathscr{D}$ on $\mathscr{X}$ by the data $(U_t, g_t)$, with indices $t\in T$.
Let $I$ be a partially ordered set, and let $(X_i, f_{ij})$ be an inverse system of varieties over $k$ such that $\mathscr{X}=\varprojlim X_i$. We have canonical (affine) maps $f_i:\mathscr{X}\to X_i$, for each $i\in I$. By \cite[Chapter~31, Lemma~3.8]{stacks-project} each  $U_t$ can be expressed as $U_t=f_{i_t}^{-1}(V_{i_t})$ for some $i_t\in I$ and some affine open $V_{i_t}\subseteq X_{i_t}$. If we pare down the $U_t$ to a finite cover, all of these maps factor through some particular index $0\in I$, and so we may write  $U_t=f_{0}^{-1}(V_{t})$ for some affine opens $V_{t}\subseteq X_{0}$.

Now, since the sheaf of rational functions on $\mathscr{X}$ is the colimit of the sheaves of rational functions on the $X_i$ (essentially by definition \cite[Chapter~31, Lemma~2.2]{stacks-project}), each $g_t\in k(U_t)$ must be the pull-back to $U_t$ of some rational function $h_{i_t}\in k(f_{i_t0}^{-1}(U_t))$. Again, now that the index set $T$ is finite, we can assume without loss of generality that $i_t=0$ for all $t$, and so $g_t\in k(U_t)$ is the pull-back of some $h_t\in k(V_t)$. But now we have some data $(V_t, h_t)$ representing a Cartier divisor $D$ on some $X_0$, and by construction $\mathscr{D}=f_0^*(D)$.

Since $X_0$ is a projective variety, the Cartier divisor $D$ on $X_0$ admits the structure of a N\'{e}ron divisor \cite[Theorem~3.5, p.~261]{lang}. We may simply pull back this additional structure to $\mathscr{D}$. If $\beta_t:V_t\times M_k\to \RR$ are the functions making $D$ a N\'{e}ron divisor, it is easy to check that $\alpha_{t}(P, v)=\beta_t(f(P), v)$ gives a  collection of functions $\alpha_t$ giving $\mathscr{D}$ the structure of an augmented divisor.
\end{proof}

\begin{remark}
For the purposes of this section, it would be sufficient to define a N\'{e}ron divisor on $\mathscr{X}$ to be one that is the pull-back of a N\'{e}ron divisor on some projective variety $\mathscr{X}\to X$. This approach seems less natural given the measure-theoretic setting, but also causes significant problems in the application of Lemma~\ref{lem:tatething} below.
\end{remark}

\begin{lemma}\label{lem:heightsforzero}
Let $\mathscr{X}$ be a provariety over $k$.
If $\mathscr{D}$ is an augmented divisor whose corresponding Cartier divisor is trivial, then $\lambda_{\mathscr{X}, \mathscr{D}}(P, \cdot)\in L^1(M_k)$ for each $P\in \mathscr{X}\setminus \operatorname{Supp}(D)$, and the value of the integral
\[\int_{M_k}\lambda_{\mathscr{X}, \mathscr{D}}(P, v)d\mu(v)\]
is bounded independent of $P$.
\end{lemma}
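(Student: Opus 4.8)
The plan is to reduce to the situation of an honest Néron divisor on a projective variety, where the integral of a local height attached to a principal divisor is computed by the product formula (as recorded in the discussion following Theorem~\ref{th:measuredplaces}), and then to control the error introduced by passing from $\mathscr{X}$ down to a finite level $X_0$ of the inverse system. First I would invoke the proof of Lemma~\ref{lem:cartierarepullback}: since $\mathscr{X}$ is a provariety, write $\mathscr{X}=\varprojlim X_i$ with finite transition maps, and realize the given augmented divisor $\mathscr{D}$ as $f_0^*(D_0)$ for some projective variety $X_0$ in the system, $f_0:\mathscr{X}\to X_0$, where $D_0$ carries the structure of a Néron divisor whose defining functions $\beta_t$ are pulled back to give the $\alpha_i$. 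Because the Cartier divisor underlying $\mathscr{D}$ is trivial, the data $(U_i,f_i)$ represent the zero divisor, so there is a single global rational function $g\in k(\mathscr{X})^*$ (or, since we work in $\Div\otimes\RR$, an $\RR$-linear combination of such) with $f_i = g$ up to units; and $g$ itself descends to some $g_0\in k(X_0)^*$. Then $\lambda_{\mathscr{X},\mathscr{D}}(P,v) = -\log|g(P)|_v + \alpha(P,v)$ where $\alpha(P,v)=\beta(f_0(P),v)$ is globally defined (the trivial cover suffices when $D$ is trivial).

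The first integrability claim, $\lambda_{\mathscr{X},\mathscr{D}}(P,\cdot)\in L^1(M_k)$, is then immediate from the defining properties of an augmented divisor: condition (3) gives $|\alpha(P,\cdot)|\le\gamma\in L^1(M_k)$, and $v\mapsto-\log|g(P)|_v$ is in $L^1(M_k)$ since $g(P)\in k^*$ and, by the remarks after Theorem~\ref{th:measure}, the function $v\mapsto\log|\alpha|_v$ is measurable with $\int_{M_k}\log|\alpha|_v\,d\mu(v)=0$ for any $\alpha\in k^*$ (and in the $\RR$-coefficient case, a finite linear combination of such). So $\lambda_{\mathscr{X},\mathscr{D}}(P,\cdot)$ is dominated by $\gamma(v)+|\log|g(P)|_v|$, an $L^1$ function.

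For the uniform bound on the integral, I would split
\[
\int_{M_k}\lambda_{\mathscr{X},\mathscr{D}}(P,v)\,d\mu(v) = -\int_{M_k}\log|g(P)|_v\,d\mu(v) + \int_{M_k}\beta(f_0(P),v)\,d\mu(v).
\]
The first term vanishes by the product formula (the integral form stated after Theorem~\ref{th:measure}, applied to $g(P)\in k^*$, extended $\RR$-linearly). For the second term, $\beta$ is (the restriction to $M_k$ of) the Néron-divisor data for a \emph{trivial-in-the-limit} but possibly nontrivial Cartier divisor $D_0$ on $X_0$; however, the key point is that $D_0$ pulls back to the trivial divisor on $\mathscr{X}$, so after replacing $X_0$ by a higher index if necessary we may assume $D_0$ is itself the divisor of a rational function $g_0$ on $X_0$ with $f_0^*g_0=g$, whence $\beta(Q,v)=-\log|g_0(Q)|_v + (\text{bounded-in-}L^1)$. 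Then $\int_{M_k}\beta(f_0(P),v)\,d\mu(v) = -\int_{M_k}\log|g(P)|_v\,d\mu(v) + \int_{M_k}(\cdots)\,d\mu(v) = 0 + O(1)$, with the $O(1)$ bounded by $\int_{M_k}\gamma(v)\,d\mu(v)<\infty$ independent of $P$ (here I use condition (3) again). Combining, the integral is bounded by a constant depending only on $\mathscr{D}$ (through $\gamma$ and the chosen trivialization), not on $P$.

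The main obstacle I anticipate is the bookkeeping in the middle: making precise that an augmented divisor whose underlying Cartier divisor is trivial on $\mathscr{X}$, yet which is pulled back from a Néron divisor on $X_0$ whose Cartier class on $X_0$ need \emph{not} be trivial, can nonetheless be analyzed by descending the global trivializing function $g$ to some level of the system and comparing $\alpha$ with $-\log|g_0|_v$; one must check that the discrepancy is an $L^1$-bounded, $P$-uniform quantity. This is exactly where conditions (1)--(3) of the definition of augmented divisor, together with the compatibility of the measure on $M_k$ with finite extensions (Theorem~\ref{th:measuredplaces}), do the work, and where one must be a little careful because the Néron structure on $X_0$ was only asserted to exist, not chosen compatibly with any particular trivialization of the pullback. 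Everything else is the product formula plus the triangle inequality.
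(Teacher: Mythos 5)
Your final bound $\int_{M_k}\gamma\,d\mu$ is the one the paper arrives at, and the two essential ingredients --- the product formula applied to the local defining function at $P$, and the $L^1$-domination from condition (3) of the definition of augmented divisor --- are both present in your write-up. But the route you take through them contains a false intermediate claim, and the detour it opens up is both unnecessary and not actually sound.

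The problematic step is the opening one: you realize the \emph{given} augmented divisor $\mathscr{D}$ as $f_0^*(D_0)$ ``where $D_0$ carries the structure of a N\'{e}ron divisor whose defining functions $\beta_t$ are pulled back to give the $\alpha_i$.'' This reverses Lemma~\ref{lem:cartierarepullback}. That lemma shows the underlying \emph{Cartier} divisor of $\mathscr{D}$ descends to some $X_0$ and that one can then equip $\mathscr{D}$ with \emph{an} augmented structure by pulling back N\'{e}ron data. It does not say that every augmented structure --- in particular, the one you are handed in the present lemma --- arises this way. So you have no right to write $\alpha_i(P,v)=\beta_t(f_0(P),v)$ for N\'{e}ron-divisor data $\beta_t$, and the subsequent maneuver of decomposing $\beta$ as $-\log|g_0|_v + (\text{bounded})$ is built on sand. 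As it happens this detour is harmless because you never actually use it: when you bound the ``second term'' by $\int_{M_k}\gamma\,d\mu$, you are invoking condition (3), which bounds $|\alpha_i(P,v)|$ by $\gamma(v)$ directly --- no decomposition of $\alpha$ into a logarithmic part and a bounded part is needed or justified.

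The paper's argument is shorter and cleaner precisely because it skips all of this. Fix an affine $U$ with data $(U,f,\alpha)$; since the Cartier class is trivial, $f$ and $1/f$ are regular on $U$, so $f(P)\in k^*$. The only place the provariety structure enters is to verify measurability and $L^1$-ness of $v\mapsto\log|f(P)|_v$: one descends $f$ (not $\mathscr{D}$, not $\alpha$) to a rational function $h$ on some finite level $X_0$ and observes that $\log|h(g(P))|_v$ is constant on the sets $A_{w,L}$ and supported on finitely many of them. Then $\int\log|f(P)|_v\,d\mu=0$ by the product formula, and condition (3) gives $|\int\alpha(P,\cdot)\,d\mu|\leq\int\gamma\,d\mu$. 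That is the whole proof. You should drop the assumption that $\alpha$ is pulled back from N\'{e}ron data (false in general) and the $\beta$-decomposition (unnecessary), and apply the product formula and condition (3) to the given local data $(U_i,f_i,\alpha_i)$ directly.
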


\begin{proof}
Let $U$ be an affine open on which $\mathscr{D}$ is given by the data $(U, f, \alpha)$.
 Note that since $\mathscr{D}$ is trivial, both $f$ and $1/f$ are  regular on $U$. As in the proof of the previous lemma, we may choose a projective variety $X$, an affine open $V$, a rational function $h\in k(V)$, and a morphism $g:\mathscr{X}\to X$ such that $U=g^{-1}(V)$, and such that  $f=h\circ g$. Now, $\log|f(P)|_v=\log|h(g(P))|_v$, and so if $X$, $V$, $h$, and $g(P)$ are defined over the finite extension $L/K$ then this function is constant on sets of the form $A_{w, L}$. Since it also vanishes off of a finite collection of sets of this form, it is $L^1$. The function $\alpha(P, \cdot)$ is measurable and bounded by an $L^1$ function, and hence is $L^1$. Now $\lambda_{X, D}(P, \cdot)$ is the sum of two $L^1$ functions, and hence is itself $L^1$. Similarly, the fact that $\lambda_{X, D}(\cdot, v)$ is $v$-adically continuous on $U$ follows from the same fact for $\log|f(\cdot)|_v$ and $\alpha(\cdot, v)$.

Now, since $f$ is regular at $P$, the product formula gives
\[\int_{M_k}\log|f(P)|_vd\mu(v)=0.\]
It follows that,
\begin{eqnarray*}
\left|\int_{M_k}\lambda_{\mathscr{X}, D}(P, \cdot)d\mu(v)\right|&\leq&\left|\int_{M_k}-\log|f(P)|_vd\mu(v)\right|+\left|\int_{M_k}\alpha(P, v)d\mu(v)\right|\\
&\leq &\int_{M_k}\left|\alpha(P, v)\right|d\mu(v)\\
&\leq &\int_{M_k}\gamma(v)d\mu(v),
\end{eqnarray*}
where $\gamma$ is the $L^1$ function ensured by the definition. This bound is independent of $P$, and hence of $U$.
\end{proof}

In light of Lemma~\ref{lem:cartierarepullback}, for any Cartier divisor $\mathscr{D}$ on $\mathscr{X}$, we may define
\[h_{\mathscr{X}, \mathscr{D}}(P)=h_{X, D}(f(P))\]
for any projective variety $f:\mathscr{X}\to X$ and Cartier divisor $D$ such that $\mathscr{D}=f^* D$. The proof of Lemma~\ref{lem:cartierarepullback} ensures that any divisor on $\mathscr{X}$ can be so exhibited, and the usual functoriality properties of the Weil height \cite[Theorem~B.3.2, p.~184]{hind-silv} show that the definition is independent of the choice of $X$, at least up to a bounded error term. Indeed, all of the basic properties one would like about heights on provarieties follow quickly from the corresponding properties for varieties.

\begin{remark}
The observation that heights on provarieties have essentially the same properties as heights on projective varieties gives another approach to the construction of the canonical height $\hat{h}_{X, C, L}$. Note that the relation $y^*L\sim \alpha x^*L$, in concert with the commutative diagram in Theorem~\ref{prop:paths} implies
\[\sigma^*\mathscr{L}=\sigma^*\pi^*L = \epsilon^*y^* L  \sim \epsilon^*\left(\alpha x^*L\right)=\alpha\pi^*L=\alpha \mathscr{L}.\]
 In particular, one has a morphism of $k$-schemes $\sigma:\path\to\path$ satisfying
\[h_{\path, \mathscr{L}}(\sigma(P))=\alpha h_{\path, \mathscr{L}}(P)+O(1),\]
and one can dispatch with the correspondence entirely, and use the (suddenly applicable) arguments in \cite{callsilv} to construct the canonical height. 

We have avoided this approach both because it takes the emphasis off of the underlying correspondence, and also because the machinery required by the arguments in Section~\ref{sec:globcan} is far more elementary.
\end{remark}

\begin{theorem}\label{th:localtoglobalweilheight}
Let $\mathscr{X}$ be a provariety over $k$, and let $\mathscr{D}$ be an augmented divisor on $\mathscr{X}$. Then we have
\[h_{\mathscr{X}, \mathscr{D}}(P)=\int_{M_k}\lambda_{\mathscr{X}, \mathscr{D}}(P, v)d\mu(v)+O(1),\]
for all $P\not\in\operatorname{Supp}(\mathscr{D})$.
\end{theorem}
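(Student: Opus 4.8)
The strategy is to reduce the claim to the analogous statement on a projective variety, using Lemma~\ref{lem:cartierarepullback} to realize everything as a pullback. First I would invoke Lemma~\ref{lem:cartierarepullback} to write the underlying Cartier divisor of $\mathscr{D}$ as $g^*D$ for some morphism $g:\mathscr{X}\to X$ to a projective variety and some Cartier divisor $D$ on $X$. Since $X$ is a projective variety, $D$ admits the structure of a N\'{e}ron divisor (by \cite[Theorem~3.5, p.~261]{lang}), and hence $g^*D$ carries a canonical augmented-divisor structure $\mathscr{D}_0$ as in the proof of Lemma~\ref{lem:cartierarepullback}. For this pulled-back structure, one has $\lambda_{\mathscr{X}, \mathscr{D}_0}(P, v)=\lambda_{X, D}(g(P), v)$, and the desired identity
\[h_{\mathscr{X}, \mathscr{D}_0}(P)=\int_{M_k}\lambda_{\mathscr{X}, \mathscr{D}_0}(P, v)\,d\mu(v)+O(1)\]
becomes, after unwinding definitions, exactly the statement that $h_{X, D}(Q)=\int_{M_k}\lambda_{X, D}(Q, v)\,d\mu(v)+O(1)$ for $Q\in X(k)$. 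This last fact should be derived by reducing to a number field: for $Q\in X(L)$ with $L/K$ finite, the local heights $\lambda_{X, D}(Q, \cdot)$ are constant on each set $A_{w, L}$, vanish off finitely many such sets, and the integral against $\mu$ collapses to the usual weighted finite sum $\sum_w \tfrac{[L_w:K_w]}{[L:K]}\lambda_{X, D}(Q, w)$, which is the classical decomposition of the Weil height into local heights up to $O(1)$ (as in \cite[Chapter~10]{lang} or \cite{hind-silv}); here one uses the product-formula/normalization discussion following Theorem~\ref{th:measure}.

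The remaining point is to handle the difference between $\mathscr{D}$ and the specific pullback structure $\mathscr{D}_0$. Since $\mathscr{D}$ and $\mathscr{D}_0$ have the same underlying Cartier divisor, their difference $\mathscr{E}=\mathscr{D}-\mathscr{D}_0$ is an augmented divisor whose underlying Cartier divisor is trivial. By Lemma~\ref{lem:heightsforzero}, $\lambda_{\mathscr{X}, \mathscr{E}}(P, \cdot)\in L^1(M_k)$ and $\int_{M_k}\lambda_{\mathscr{X}, \mathscr{E}}(P, v)\,d\mu(v)$ is bounded independently of $P$. On the Weil-height side, $h_{\mathscr{X}, \mathscr{D}}$ and $h_{\mathscr{X}, \mathscr{D}_0}$ agree up to $O(1)$ because they are heights attached to the same Cartier divisor (functoriality of the Weil height on provarieties, noted just before the theorem). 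Adding these two comparisons to the identity for $\mathscr{D}_0$ gives the result for $\mathscr{D}$, with all error terms absorbed into a single $O(1)$. One should also check measurability and $L^1$-boundedness of $\lambda_{\mathscr{X}, \mathscr{D}}(P, \cdot)$ itself so that the integral on the right-hand side makes sense; this follows by writing $\lambda_{\mathscr{X}, \mathscr{D}}=\lambda_{\mathscr{X}, \mathscr{D}_0}+\lambda_{\mathscr{X}, \mathscr{E}}$ and using that the first term is a pullback of a N\'{e}ron local height (hence measurable and, on a fixed affine chart over a number field, bounded by a finitely supported function) and the second is $L^1$ by Lemma~\ref{lem:heightsforzero}.

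The main obstacle I anticipate is purely bookkeeping rather than conceptual: making sure that the passage from the integral over $M_k$ to the classical weighted finite sum of local heights is carried out with the correct normalizations, so that the measure $\mu$ from Theorem~\ref{th:measuredplaces} genuinely reproduces the weights $[L_w:K_w]/[L:K]$ appearing in \cite{lang, hind-silv}, and that this is independent of the finite field $L$ over which $Q$ and the chart data are defined (which is where coherence, i.e.\ \eqref{eq:coherent}, is used). A secondary subtlety is that the $O(1)$ on the right-hand side must be uniform in $P$; this is exactly what Lemma~\ref{lem:heightsforzero} provides for the trivial-Cartier-divisor part, and what the classical local-to-global height comparison provides for the N\'{e}ron part, so no new uniformity argument is needed.
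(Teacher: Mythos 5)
Your proposal matches the paper's argument essentially step for step: realize the underlying Cartier divisor as $f^*D$ via Lemma~\ref{lem:cartierarepullback}, pull back a N\'{e}ron divisor structure from the projective variety so the integral collapses to the classical weighted finite sum over a number field, and absorb the discrepancy between the given augmented structure and the pulled-back one as a trivial-Cartier-divisor augmented divisor handled by Lemma~\ref{lem:heightsforzero}. The only difference is notational (your $\mathscr{D}_0$ for what the paper treats implicitly as the local height $\lambda_{X,D}(f(P),v)$ on $\mathscr{X}$).
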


\begin{proof}
As shown in the proof of Lemma~\ref{lem:cartierarepullback}, there exists a projective variety $f:\mathscr{X}\to X$ and a divisor $D$ on $X$ such that $\mathscr{D}=f^*D$. If $\lambda_{X, D}$ is the local height associated to some N\'{e}ron divisor on $D$, and if $X$, $f(P)$, and $D$ are defined over the finite extension $L/K$, then we have
\begin{eqnarray*}
\int_{M_k}\lambda_{X, D}(f(P), v)d\mu(v)&=&\sum_{v\in M_L}\frac{[L_v:K_v]}{[L:K]}\lambda_{X, D}(f(P), v)\\
&=&h_{X, D}(f(P)),
\end{eqnarray*}
since the function $\lambda_{X, D}(f(P), \cdot)$ is constant on sets of the form $A_{v, L}$ for $v\in M_L$, in the notation of Theorem~\ref{th:measuredplaces}. On the other hand, we have $h_{\mathscr{X}, \mathscr{D}}(P)=h_{X, D}(f(P))+O(1)$ by definition. It thus suffices to show that
\[\int_{M_k}\left(\lambda_{\mathscr{X}, \mathscr{D}}(P, v)-\lambda_{X, D}(f(P), v)\right)d\mu(v)\]
is bounded independent of $P$. But this follows from Lemma~\ref{lem:heightsforzero}. Specifically,  $(P, v)\mapsto \lambda_{X, D}(f(P), \cdot)$ is a local height on $\mathscr{X}$ relative to $\mathscr{D}=f^*D$, and so the difference $(P, v)\mapsto\lambda_{\mathscr{X}, \mathscr{D}}(P, v)-\lambda_{X, D}(f(P), v)$ is a local height on $\mathscr{X}$ relative to some augmented divisor $\mathscr{E}$ whose underlying Cartier divisor is trivial (simply because the forgetful map $\augdiv(X)\to\cadiv(X)$ is a homomorphism).
\end{proof}


\section{Local canonical heights}

In this section we construct local canonical heights for a polarized correspondence defined over the algebraic closure $k$ of a number field. We equip $M_{k}$ with the structure of a $\sigma$-finite measure space, with measure $\mu$, as in the previous section.
In order to state the theorem, let $X$ be a projective variety over $k$, and let $C$ be a correspondence on $X$ polarized by $L$ and $\alpha>1$. Construct $\path$ as in Section~\ref{sec:geom}, and to simplify notation let $\mathscr{L}=\pi^*L$.

\begin{theorem}
Given the data above, there exists a function
\[\hat{\lambda}_{X, C, L}:\left(\path(k)\setminus\operatorname{Supp}(\mathscr{L})\right)\times M_{k}\to \RR\]
such that the following hold:
\begin{enumerate}
\item The function $\hat{\lambda}_{X, C, L}$ is the local height function associated to some augmented divisor on $\path$ with Cartier divisor $\mathscr{L}$.
\item For some rational function $f\in k(C)^*\otimes \RR$, for all $P\in\path(k)$ with  $\epsilon(P)$ not a zero or pole of $f$, and all $v\in M_k$,
\[\hat{\lambda}_{X, C, L}(\sigma(P), v)=\alpha\hat{\lambda}_{X, C, L}(P, v)-\log|f\circ\epsilon(P)|_v.\]
\item For all $P\in\path(k)$, \[\hat{h}_{X, C, L}(P)=\int_{M_{k}}\hat{\lambda}_{X, C, L}(P, v)d\mu(v).\]
\end{enumerate}
\end{theorem}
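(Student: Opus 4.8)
The plan is to build $\hat{\lambda}_{X,C,L}$ by the same Tate-style telescoping argument used for the global height, but carried out at the level of augmented divisors on the provariety $\path$, so that the three asserted properties fall out of the construction. First I would fix a N\'{e}ron (hence augmented) divisor representative of $\mathscr{L}=\pi^*L$ on $\path$; by Lemma~\ref{lem:cartierarepullback} such a thing exists, since $\path$ is a provariety and $\mathscr{L}$ is pulled back from the projective variety $X$. Write $\lambda_0 = \lambda_{\path,\mathscr{L}}$ for the associated local height. The polarization gives $y^*L - \alpha x^*L = \operatorname{div}(f)$ for some $f\in k(C)^*\otimes\RR$, and via the commutative diagram of Theorem~\ref{prop:paths} one has, on $\path$, the relation $\sigma^*\mathscr{L} = \epsilon^*y^*L$ and $\mathscr{L} = \pi^*L = \epsilon^*x^*L$, so that $\sigma^*\mathscr{L} - \alpha\mathscr{L} = \epsilon^*\operatorname{div}(f) = \operatorname{div}(f\circ\epsilon)$ as Cartier divisors on $\path$. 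Consequently the function
\[
\delta(P,v) := \lambda_0(\sigma(P),v) - \alpha\,\lambda_0(P,v) + \log|f\circ\epsilon(P)|_v
\]
is the local height attached to an augmented divisor on $\path$ whose underlying Cartier divisor is \emph{trivial} — it is $\sigma^*(\text{augmented }\mathscr{L}) - \alpha\cdot(\text{augmented }\mathscr{L}) + \operatorname{div}(f\circ\epsilon)$, which maps to $0$ under the forgetful homomorphism $\augdiv(\path)\to\cadiv(\path)$.

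Next I would define
\[
\hat{\lambda}_{X,C,L}(P,v) = \sum_{n=0}^{\infty}\alpha^{-(n+1)}\,\delta(\sigma^n(P),v),
\]
so that formally $\hat\lambda(\sigma(P),v) = \alpha\hat\lambda(P,v) - \log|f\circ\epsilon(P)|_v$, which is property~(2). The point is that this series converges, defines an augmented divisor structure on $\mathscr{L}$, and is compatible with the stated continuity/measurability constraints. For convergence and uniform control I would invoke Lemma~\ref{lem:heightsforzero}: since the Cartier divisor underlying $\delta$ is trivial, $\delta(Q,\cdot)\in L^1(M_k)$ with $\int_{M_k}\delta(Q,v)\,d\mu(v)$ bounded independently of $Q$, and moreover $|\delta(Q,v)|\le\gamma(v)$ for a fixed $\gamma\in L^1(M_k)$ (away from the relevant support) — this is exactly condition (3) in the definition of an augmented divisor. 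Hence $\sum_n\alpha^{-(n+1)}|\delta(\sigma^n(P),v)|\le \frac{1}{\alpha-1}\gamma(v)$, which is $L^1$, giving absolute and (via the $L^1$ majorant) dominated convergence, plus the bound needed to check that $\hat\lambda - \lambda_0 = \sum_n \alpha^{-(n+1)}\delta(\sigma^n(\cdot),\cdot)$ defines a legitimate augmented divisor with trivial underlying Cartier divisor; adding back $\lambda_0$ yields property~(1). The $v$-adic continuity in $P$ follows from $v$-adic continuity of each $\delta(\cdot,v)$ together with uniform convergence of the series on $M_k$ for fixed $v$ (the geometric tail estimate is uniform in $P$), and measurability in $v$ from the fact that a pointwise limit of measurable functions is measurable.

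Finally, property~(3): integrating termwise, which is justified by the $L^1$-domination above (Fubini/Tonelli), gives
\[
\int_{M_k}\hat\lambda_{X,C,L}(P,v)\,d\mu(v) = \int_{M_k}\lambda_0(P,v)\,d\mu(v) + \sum_{n=0}^{\infty}\alpha^{-(n+1)}\int_{M_k}\delta(\sigma^n(P),v)\,d\mu(v).
\]
By Theorem~\ref{th:localtoglobalweilheight} the first integral is $h_{\path,\mathscr{L}}(P) + O(1) = h_{X,L}(\pi(P)) + O(1)$, and by the product formula each $\int_{M_k}\log|f\circ\epsilon(\sigma^n(P))|_v\,d\mu(v)=0$, so $\int_{M_k}\delta(\sigma^n(P),v)\,d\mu(v)$ equals $\int_{M_k}\bigl(\lambda_0(\sigma^{n+1}(P),v)-\alpha\lambda_0(\sigma^n(P),v)\bigr)d\mu(v)$, a telescoping-in-expectation quantity that reconstructs exactly the Tate limit $\lim_{N}\alpha^{-N}h_{X,L}(\pi\circ\sigma^N(P)) - h_{X,L}(\pi(P))$; adding back the first integral collapses the whole expression to $\hat h_{X,C,L}(P)$. (Alternatively, and more cleanly, one observes that $Q\mapsto\int_{M_k}\hat\lambda(Q,v)\,d\mu(v)$ differs from a Weil height on $\path$ relative to $\mathscr{L}$ by $O(1)$ and satisfies the functional equation $g(\sigma P)=\alpha g(P)$ obtained by integrating property~(2) and using the product formula, whence it must equal $\hat h_{X,C,L}\circ$ by the uniqueness built into Tate's limit argument in Section~\ref{sec:globcan}.) The main obstacle, and the step deserving the most care, is the interchange of the infinite sum with the integral over $M_k$ and the verification that the limiting object genuinely satisfies all four clauses of the definition of an augmented divisor — in particular producing a single $L^1$ majorant $\gamma$ valid uniformly in $P$ — which is precisely where the uniformity in Lemma~\ref{lem:heightsforzero} is indispensable; without the provariety framework and that lemma, the naive pointwise construction would not obviously be integrable.
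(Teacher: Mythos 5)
Your proposal is essentially the same argument as the paper's: the paper introduces the error term $\gamma(P,v)=\lambda_{X,L}(\pi\sigma(P),v)-\alpha\lambda_{X,L}(\pi(P),v)+\log|f\circ\epsilon(P)|_v$ (your $\delta$), then solves $\gamma=\hat\gamma\circ\sigma-\alpha\hat\gamma$ via a Banach contraction (Lemma~\ref{lem:tatething}) and sets $\hat\lambda=\lambda_{X,L}\circ\pi-\hat\gamma$; the unique fixed point of that contraction is exactly $\hat\gamma(P,v)=-\sum_{n\ge 0}\alpha^{-(n+1)}\delta(\sigma^n P,v)$, so your series construction is the contraction argument unwound. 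Two small remarks. First, your displayed definition $\hat\lambda=\sum_{n\ge 0}\alpha^{-(n+1)}\delta(\sigma^n P,v)$ is a transcription slip: it would give a function with trivial underlying Cartier divisor and would not satisfy property~(2). Your own subsequent line $\hat\lambda-\lambda_0=\sum_n\alpha^{-(n+1)}\delta(\sigma^n(\cdot),\cdot)$, i.e.\ $\hat\lambda=\lambda_0+\sum_n\alpha^{-(n+1)}\delta(\sigma^n P,v)$, is the correct one, and the functional equation then checks out. Second, where the paper proves that $\gamma$ extends to a bounded function on all of $\path(k)\times M_k$ by an explicit local-chart computation showing the relevant $h$ and $1/h$ are regular, you replace this by the observation that $\delta$ is the local height of an augmented divisor in $\augdiv(\path)$ whose underlying Cartier divisor is trivial (using that $\augdiv$ is a group); that is a cleaner formulation of the same fact and also delivers, via the definition of an augmented divisor, the single $L^1$ majorant $\gamma$ and the bound from Lemma~\ref{lem:heightsforzero} that you need for the dominated-convergence/Fubini step in part~(3). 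Your two proposed routes for part~(3) — the direct telescope-in-expectation and the indirect uniqueness argument — are both valid, and the paper uses the second.
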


The proof requires a lemma extending an argument dating back to Tate, present also in \cite{callsilv}.
\begin{lemma}\label{lem:tatething}
Fix a set $\mathcal{X}$, a measure space $\mathcal{Y}$, and a real number $\alpha>1$. Suppose also that we are given a topology on each fibre of $\mathcal{X}\times \mathcal{Y}$, and a function $\phi:\mathcal{X}\to\mathcal{X}$ such that the induced function from $\mathcal{X}\times \mathcal{Y}$ to itself is continuous on each fibre.
Finally, suppose we are given a function $\gamma:\mathcal{X}\times\mathcal{Y}\to\RR$ which
\begin{enumerate}
\item is bounded;
\item is continuous on fibres of $\mathcal{X}\times \mathcal{Y}\to\mathcal{Y}$;
\item is measurable on fibres of $\mathcal{X}\times \mathcal{Y}\to\mathcal{X}$; and
\item vanishes off of $\mathcal{X}\times W$, for some set $W\subseteq \mathcal{Y}$ of finite measure.
\end{enumerate}
Then there exists a unique bounded function $\hat{\gamma}:\mathcal{X}\times\mathcal{Y}\to\RR$, continuous in the first coordinate and integrable in the second, with
\begin{equation}\label{eq:gammahat}\gamma(x, y)=\hat{\gamma}(\phi (x), y)-\alpha\hat{\gamma}(x, y)\end{equation}
for all $x\in\mathcal{X}$ and $y\in \mathcal{Y}$.
\end{lemma}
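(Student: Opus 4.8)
The plan is to solve the functional equation \eqref{eq:gammahat} explicitly by the telescoping series that goes back to Tate, exactly as in the construction of the global canonical height in the proof of Theorem~\ref{th:canheight} (and in \cite{callsilv}). First I would rewrite \eqref{eq:gammahat} as $\hat\gamma(x,y)=\alpha^{-1}\hat\gamma(\phi(x),y)-\alpha^{-1}\gamma(x,y)$ and iterate it $n$ times to get
\[\hat\gamma(x,y)=\alpha^{-n}\hat\gamma(\phi^n(x),y)-\sum_{k=0}^{n-1}\alpha^{-(k+1)}\gamma(\phi^k(x),y).\]
Since any admissible solution is bounded and $\alpha>1$, the first term tends to $0$ as $n\to\infty$; this forces
\[\hat\gamma(x,y)=-\sum_{k=0}^{\infty}\alpha^{-(k+1)}\gamma(\phi^k(x),y),\]
which already proves uniqueness. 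For existence I would take this series as the \emph{definition} of $\hat\gamma$ and verify the four required properties.

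Convergence and the bound $|\hat\gamma(x,y)|\le\|\gamma\|_\infty/(\alpha-1)$ are immediate, since the $k$-th summand is dominated by $\alpha^{-(k+1)}\|\gamma\|_\infty$. For continuity in the first coordinate, fix $y$: the hypothesis that $\phi$ induces a continuous self-map of the fibre $\mathcal X\times\{y\}$ makes each iterate $x\mapsto\phi^k(x)$ continuous there, hence each summand $x\mapsto\gamma(\phi^k(x),y)$ is continuous, and the Weierstrass $M$-test with $M_k=\alpha^{-(k+1)}\|\gamma\|_\infty$ (a bound uniform in $y$) gives uniform convergence, so the sum is continuous. For measurability and integrability in the second coordinate, fix $x$: each $y\mapsto\gamma(\phi^k(x),y)$ is measurable and vanishes off $W$, so $\hat\gamma(x,\cdot)$ is measurable and vanishes off $W$, and $\int_{\mathcal Y}|\hat\gamma(x,y)|\,d\mu(y)\le\|\gamma\|_\infty\mu(W)/(\alpha-1)<\infty$ because $\mu(W)<\infty$. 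Finally \eqref{eq:gammahat} itself drops out by reindexing: $\hat\gamma(\phi(x),y)-\alpha\hat\gamma(x,y)=-\sum_{k\ge1}\alpha^{-k}\gamma(\phi^k(x),y)+\sum_{k\ge0}\alpha^{-k}\gamma(\phi^k(x),y)=\gamma(x,y)$, the tails cancelling.

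I do not anticipate a genuine obstacle; this is a standard telescoping computation carried out uniformly over the measure space $\mathcal Y$ of parameters. The only points needing care are keeping every estimate uniform in $y$ (so that the $M$-test yields fibrewise continuity, with no dependence on the possibly $y$-varying topologies), and invoking the finite-measure hypothesis on $W$ at the single step where it is actually needed — namely to upgrade the uniform bound on $\hat\gamma$ to integrability on the fibres over $\mathcal X$. It is also worth recording that the construction is manifestly functorial in $\gamma$, which is what will let us feed it a local Weil function and extract $\hat\lambda_{X,C,L}$.
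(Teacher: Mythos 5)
Your proof is correct and takes essentially the same route as the paper: the paper invokes the Banach fixed point theorem for the contraction $\delta\mapsto\frac{1}{\alpha}\left(\delta\circ(\phi\times\mathrm{id})-\gamma\right)$ on the Banach space of bounded functions that are fibrewise continuous in the first coordinate and fibrewise measurable in the second, and your explicit series $-\sum_{k\ge 0}\alpha^{-(k+1)}\gamma(\phi^k(x),y)$ is precisely the fixed point obtained by iterating that contraction from the zero function. The way you use the finite-measure support $W$ to upgrade boundedness to integrability matches the paper's final step as well.
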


\begin{proof}
The construction is based on \cite[Lemma~2.2]{callsilv}. First, let $\mathcal{Z}$ be the set of functions $F:\mathcal{X}\times\mathcal{Y}\to \RR$ which are bounded,   continuous on fibres of $\mathcal{X}\times\mathcal{Y}\to\mathcal{Y}$, and measurable fibres of $\mathcal{X}\times\mathcal{Y}\to\mathcal{X}$. Note that $\gamma\in \mathcal{Z}$ by hypothesis, and the supremum norm gives $\mathcal{Z}$ the structure of a Banach space.

As in \cite[Proof of Lemma~2.2]{callsilv}, we let $\hat{\gamma}\in\mathcal{Z}$ denote the unique fixed point of the contraction $\delta\mapsto S(\delta)$ on $\mathcal{Z}$ given by \[S(\delta)(x, y)=\frac{1}{\alpha}\left(\delta( \phi (x), y)-\gamma(x, y)\right).\] The relation~\eqref{eq:gammahat} holds by definition. It remains to show that $\hat{\gamma}$ is in fact integrable (not merely measurable) in the second coordinate.

Choose $W\subseteq\mathcal{Y}$ of finite measure such that $\gamma$ vanishes off of $\mathcal{X}\times W$. Then the same is true for $S^n(\gamma)$, for every $n$, and hence for $\hat{\gamma}$. Any function which is bounded, measurable, and supported on a set of finite measure is integrable.
\end{proof}

\begin{proof}[Proof of Theorem~\ref{th:localheights}]
Let $\lambda_{X, L}$ be some local height function, given by a N\'{e}ron divisor, and let $f\in k(C)^*\otimes\RR$ be a rational function with
\[\operatorname{div}(f)=y^*L-\alpha x^*L.\]
Note that, by the basic properties of local heights \cite[Theorem~B.8.1, p.~239]{hind-silv} we have  
\begin{eqnarray*}
\lambda_{X, L}(\pi\circ\sigma(P), v)&=&\lambda_{X, L}(y\circ\epsilon(P), v)\\
&=&\lambda_{C, y^*L}(\epsilon(P), v)+O(1)\\
&=&\lambda_{C, \alpha x^*L+\operatorname{div}(f)}(\epsilon(P), v)+O(1)\\
&=&\lambda_{C, \alpha x^*L}(\epsilon(P), v)+\lambda_{C, \operatorname{div}(f)}(\epsilon(P), v)+O(1)\\
&=&\alpha\lambda_{X, L}(x\circ\epsilon(P), v)+\lambda_{C, \operatorname{div}(f)}(\epsilon(P), v)+O(1)\\
&=&\alpha\lambda_{X, L}(\pi(P), v)-\log|f\circ\epsilon(P)|_v+O(1),
\end{eqnarray*}
for any $P\in\path(k)$ so long as $\pi(P), \pi\circ\sigma(P)$ are not in the support of $L$, and $f$ and $1/f$ are regular at $\epsilon(P)$. Note that the bound implied by the $O(1)$ notation is independent of $P$.  Also, since we are applying \cite[Theorem~B.8.1, p.~239]{hind-silv} in some number field $L$ over which $X$, $C$, $L$, $f$ are all defined, we may take the $O(1)$ bound to be constant on sets of the form $A_{w, L}\subseteq M_k$, and to vanish on all but finitely many of those. In other words, if we set
\[\gamma(P, v)=\lambda_{X, L}(\pi\circ\sigma(P), v)-\alpha\lambda_{X, L}(\pi(P), v)+\log|f\circ\epsilon(P)|_v,\]
then we have just shown $\gamma$ to be a bounded, measurable function which vanishes off $(\path(k)\setminus\operatorname{Supp}(\mathscr{L})\cup\operatorname{Supp}(\sigma^*\mathscr{L}))\times W$, for some set $W$ of finite measure. We wish to show that $\gamma$ extends to a function on $\path(k)\times M_k$ which meets the conditions of Lemma~\ref{lem:tatething}.

Let $U\subseteq X$ be an affine open on which $L$ is defined locally by $g=0$, and let $\lambda_{X, L}$ be defined by
\[\lambda_{X, L}(P, v)=-\log|g(P)|_v+\beta(P, v).\] 
Then, where defined,
\begin{eqnarray}
\gamma(P, v)&=&-\log|g\circ\pi\circ\sigma(P)|_v+\beta(\pi\circ\sigma(P), v)+\alpha\log|g\circ\pi(P)|_v\nonumber\\
&&-\alpha\beta(\pi(P), v)-\log|f\circ\epsilon(P)|_v\nonumber\\
&=&\log\left|h(P)\right|_v+\beta(\pi\circ\sigma(P), v)-\alpha\beta(\pi(P), v)\label{eq:gammaisgood}
\end{eqnarray}
for
\[h=\frac{\left(g\circ\pi\right)^\alpha}{\left(f\circ\epsilon\right)\left(g\circ\pi\circ\sigma\right)}.\]
But by hypothesis, $h$ and $1/h$ are regular on $U$ (since this function defines the trivial Cartier divisor), and so \eqref{eq:gammaisgood} gives an extension of $\gamma$ to a bounded function on all of $U\times M_k$, continuous in the first variable and measurable in the second. Covering $\path$ by finitely many affine opens allows us to so extend $\gamma$ to all of $\path(k)\times M_k$.

We may now apply Lemma~\ref{lem:tatething} to $\gamma$ to produce a unique bounded function $\hat{\gamma}:\path(k)\times M_k\to \RR$, continuous in the first variable and integrable in the second, such that \[\gamma(P, v)=\hat{\gamma}(\sigma(P), v)-\alpha\hat{\gamma}(P, v).\]

Given this, we set
\[\hat{\lambda}_{X, C, L}(P, v)=\lambda_{X, L}(\pi(P), v)-\hat{\gamma}(P, v).\]
Immediately from the definition we have
\begin{eqnarray*}
\hat{\lambda}_{X, C, L}(\sigma(P), v)&=&\lambda_{X, L}(\pi\circ\sigma(P), v)-\hat{\gamma}(\sigma(P), v)\\
&=&\alpha\lambda_{X, L}(\pi(P), v)-\log|f\circ\epsilon(P))|_v+\gamma(P, v)-\hat{\gamma}(\sigma(P), v)\\
&=&\alpha\lambda_{X, L}(\pi(P), v)-\log|f\circ\epsilon(P))|_v-\alpha\hat{\gamma}(P, v)\\
&=&\alpha\hat{\lambda}_{x, C, L}(P, v)-\log|f\circ\epsilon(P))|_v
\end{eqnarray*}
wherever both sides are defined. We also note that, since $\hat{\gamma}$ is continuous in the first coordinate, measurable in the second, and bounded by an $L^1$ function of the second coordinate, the function $\hat{\lambda}_{X, C, L}$ is the local height associated to some augmented divisor with underlying Cartier divisor $L$.

We now aim to prove that these local canonical heights indeed sum to the global canonical height, following the same argument as in \cite[Theorem~2.3]{callsilv}.
We may define
\begin{equation}\label{eq:hstar}h^*(P)=\int_{M_{k}} \hat{\lambda}_{X, C, L}(P, v)d\mu(v)\end{equation}
for $P\not\in \operatorname{Supp}(\mathscr{L})$. We wish to show that this function extends to $\path(k)$.

Note that, if $g\in k(X)$, then $L'=L-\operatorname{div}(g)$ also offers a polarization of the correspondence in question. If we take as a local height $\lambda_{X, L-\operatorname{div}(g)}$ the function
\[\lambda_{X, L'}(P, v)=\lambda_{X, L}(P, v)+\log|g(P)|_v,\]
then we see that
\[y^*L'-\alpha x^*L'=y^*L-\alpha x^*L-y^*\operatorname{div}(g)+\alpha x^*\operatorname{div}(g)=\operatorname{div}(f'),\]
for $f'=(g\circ x)^\alpha f/g\circ y$. 
From this, note that defining $\gamma$ as above relative to this divisor gives
\begin{eqnarray*}
\gamma'(P, v)&=&
\lambda_{X, L'}(\pi\circ\sigma(P), v)-\alpha\lambda_{X, L'}(\pi(P), v)+\log|f'\circ\epsilon(P)|_v\\
&=&\lambda_{X, L}(\pi\circ\sigma(P), v)+\log|g\circ\pi\circ\sigma(P)|v-\alpha\lambda_{X, L}(\pi(P), v)\\
&&-\alpha\log|g\circ\pi(P)|_v+\log\left|\frac{(f\circ \epsilon(P))(g\circ x\circ \epsilon(P))^\alpha}{(g\circ y\circ \epsilon(P))}\right|_v\\
&=&\lambda_{X, L}(\pi\circ\sigma(P), v)-\alpha\lambda_{X, L}(\pi(P), v)+\log|f\circ\epsilon(P)|_v\\
&=&\gamma(P, v),
\end{eqnarray*}
Since $\pi\circ\sigma = y\circ\epsilon$ and $\pi=x\circ\epsilon$. In other words, the function $\gamma$ constructed above depends only on the linear equivalence class of $L$, and hence so does the function $\hat{\gamma}$. This gives
\[\hat{\lambda}_{X, C, L'}(P, v)=\hat{\lambda}_{X, C, L}(P, v)+\log|g(P)|_v,\]
wherever both sides are defined,  and hence by the product formula
\[\int_{M_{k}} \hat{\lambda}_{X, C, L}(P, v)d\mu(v)=\int_{M_{k}} \hat{\lambda}_{X, C, L'}(P, v)d\mu(v)\]
(for any $P\not\in \operatorname{Supp}(\mathscr{L})\cup \operatorname{Supp}(\mathscr{L}')$).

 By a standard argument, for any given $P\in\path(k)$ we can find some $L'\sim L$ such that $\pi(P)\not\in \operatorname{Supp}(L')$, and hence \eqref{eq:hstar} extends to a well-defined function $h^*:\path(k)\to \RR$.
Now, since $\hat{\lambda}_{X, C, L}$ is in particular a local height on $\path$ relative to $\mathscr{L}$, we have by Theorem~\ref{th:localtoglobalweilheight} and Theorem~\ref{th:canheight} that
\begin{equation}\label{eq:hstartclosetocan}h^*(P)=\int_{M_k}\hat{\lambda}_{X, C, L}(P, v)d\mu(v)=h_{\path, \mathscr{L}}(P)+O(1)=\hat{h}_{X, C, L}(P)+O(1).\end{equation}
But note that, for
\begin{equation}\label{eq:pnotinsupp}P\not\in\operatorname{Supp}(\mathscr{L})\cup\operatorname{Supp}(\sigma^*\mathscr{L})\end{equation} we have
\begin{eqnarray*}
h^*(\sigma(P))&=&\int_{M_k}\hat{\lambda}_{X, C, L}(\sigma(P), v)d\mu(v)\\
&=&\int_{M_k}\alpha\hat{\lambda}_{X, C, L}(P, v)d\mu(v)-\int_{M_k}\log|f_i\circ\epsilon(P)|_vd\mu(v)\\
&=&\alpha h^*(P).
\end{eqnarray*}
As usual, we can always replace $\mathscr{L}$ by a linearly equivalent divisor so that all terms are defined at $P$, and hence we have $h^*(\sigma(P))=\alpha h^*(P)$ for all $P\in \path(k)$. This in turn yields
\[\left|h^*(P)-\hat{h}_{X, C, L}(P)\right|=\alpha^{-n}\left|h^*(\sigma^n(P))-\hat{h}_{X, C, L}(\sigma^n(P))\right|=O(\alpha^{-n}),\]
where the implied constant depends only on $X$, $C$, and $L$. Letting $n\to\infty$, we have $h^*=\hat{h}_{X, C, L}$, as claimed. 
\end{proof}

\end{document}